\newcolumntype{B}[1]{>{\centering\arraybackslash}m{#1}}
\theoremstyle{plain}
\theoremstyle{plain}
\theoremstyle{plain}
\newtheorem{assumption}{Assumption}[section]
\theoremstyle{definition}
\theoremstyle{plain}
\theoremstyle{remark}
\newtheorem{remark}{Remark}[section]
\theoremstyle{remark}
\theoremstyle{plain}
\newtheorem{proposition}{Proposition}[section]
\DeclareMathOperator*{\argmin}{\arg \min}
\newcommand{\R}{\mathbb{R}} 
\DeclareMathOperator{\prox}{prox}
\DeclareMathOperator{\dom}{dom}
\title{Trust your source: quantifying source condition elements for variational regularisation methods}
\date{}
\author[1]{Martin Benning}
\author[2]{Tatiana A.~Bubba}
\author[3]{Luca Ratti}
\author[4]{Danilo Riccio}
\affil[1]{Department of Computer Science, University College London, UK}
\affil[2]{Department of Mathematical Sciences, University of Bath, Bath, UK}
\affil[3]{Department of Mathematics, University of Bologna, Bologna, Italy}
\affil[4]{School of Mathematical Sciences, Queen Mary University of London, London, UK}
\begin{document}

\maketitle

\abstract{Source conditions are a key tool in  regularisation theory that are needed to derive error estimates and convergence rates for ill-posed inverse problems. In this paper, we provide a recipe to practically compute source condition elements as the solution of convex minimisation problems that can be solved with first-order algorithms. We demonstrate the validity of our approach by testing it  on two inverse problem case studies in machine learning and image processing: sparse coefficient estimation of a polynomial via LASSO regression and recovering an image from a subset of the coefficients of its discrete Fourier transform. We further demonstrate that the proposed approach can easily be modified to solve the machine learning task of identifying the optimal sampling pattern in the Fourier domain for a given image and variational regularisation method, which has applications in the context of sparsity promoting reconstruction from magnetic resonance imaging data. 
}

\section{Introduction}
The canonical example of an  inverse problem is the resolution of the linear equation
\begin{equation}\label{eq:InvProbl}
  K u^{\dag} = f  
\end{equation}
where $u^{\dag} \in X$ is the quantity we wish to recover from the data $f \in Y$, and $K: X \rightarrow Y$ is a (bounded, linear) operator. 
In order for \eqref{eq:InvProbl} to be ill-posed, the operator $K$ is usually assumed to be compact, such that the Moore-Penrose inverse of $K$ is unbounded. In most practical applications, the exact data $f$ is not accessible; instead, we only have access to a perturbed version $f^{\delta} \in Y$ and we are required to combat the ill-posedness with regularisation methods. Over the past decades, variational regularisation methods have been extensively used to approximate the solution of ill-posed inverse problems~\cite{scherzer2009variational,benning2018modern}. The key idea of variational regularisation methods consists of approximating $u^{\dag}$ in~\eqref{eq:InvProbl} as the solution of the minimisation problem
\begin{equation}\label{eq:VarRegu}
    \argmin_{u \in X} \left\{\mathcal{D}(Ku, f^{\delta}) + \alpha J(u) \right\},
\end{equation}
where $\mathcal{D}(Ku, f^{\delta})$ is a data fidelity term expressing the gap between the computed and measured data, $J(u)$ is a regularisation functional favouring a-priori information  on the solution or, equivalently, penalising solutions with undesired structures, and $\alpha >0$ is the regularisation parameter controlling the influence of the two terms on the minimiser. In this work, the data fidelity term is simply the least-squares term $\mathcal{D}(Ku, f^{\delta}) = \frac{1}{2} \|Ku-f^{\delta}\|_Y^2$. The choice of the regularisation term is less straightforward and it has been an active field of research: from the pioneering work of Tikhonov~\cite{tikhonov1943stability, tihonov1963solution}, to more recent papers on sparsity promoting regularisation~\cite{donoho1992superresolution,candes2006robust,daubechies2004iterative}, passing through the seminal work of Rudin et al.~\cite{rudin1992nonlinear} and Mumford and Shah~\cite{mumford1989optimal} where nonlinear variational models were first introduced. We want to emphasise that the concepts of inverse problems and variational regularisation also play a key role in machine learning tasks such as regression and logistic regression, with the LASSO \cite{tibshirani1996regression} being one of the most prominent examples of a sparsity-promoting regression approach of the form of \eqref{eq:VarRegu}.

One reason for the popularity of variational regularisation methods is that, along with an intuitive approach to modelling, these provide a framework for their basic analysis, particularly in the case of convex regularisation functionals. In this context, once the existence of a minimiser for~\eqref{eq:VarRegu} is guaranteed by suitable assumptions, a compelling matter is establishing error estimates and quantify rates of convergence. These are generally smoothness conditions on the solution that are required, which go under the name of \textit{source conditions}.

There exist several different types of source conditions and a detailed survey of their role  to derive convergence rates can be found in~\cite[Chapter 3]{schuster2012regularization}. Classical source conditions for nonlinear variational methods trace back to \cite{chavent1997regularization} and the seminal work of Burger and Osher~\cite{burger2004convergence}, where the authors derive convergence rates in the Bregman distance~\cite{bregman1967relaxation} setting for general convex functionals in Banach spaces, extending the results already known in the quadratic case~\cite{engl1989convergence}. 
By that time, convergence rates under source conditions were established in the Hilbert setting~\cite{tautenhahn1998optimality}, also for the nonlinear case~\cite{engl1996regularization}.

An alternative -- but often equivalent (see \cite[Theorem 5.12]{benning2018modern}) -- formulation to classical source conditions is provided by \textit{range conditions} (see~\cite[Proposition 1]{burger2004convergence} and \cite[Definition 5.8]{benning2018modern}), which ensure that the unknown solution $u^{\dag}$ of the inverse problem \eqref{eq:InvProbl} is in the range of the variational regularisation operator. For example, range conditions (together with a restricted injectivity assumption) allow to establish linear convergence rates for $\ell^1$ regularisation, avoiding stronger assumptions like the restricted isometry property~\cite{grasmair2011necessary}. More recently, range conditions have also been deployed in the context of data-driven regularisation~\cite{mukherjee2021learning}.

While we will focus on source and range conditions in this work, we want to emphasise that stronger source conditions exist (like those in~\cite{resmerita2005regularization}) that yield better convergences 
rates compared to those derived in~\cite{burger2004convergence}. 

Other noticeable source conditions formulated as a variational inequality (instead of an equation or inclusion) are the so-called variational source conditions first introduced in~\cite{hofmann2007convergence} in the context of nonlinear ill-posed inverse problems. These have been widely used to derive convergence rates~\cite{flemming2012generalized}: in particular, in the context of sparsity promoting regularisers~\cite{grasmair2008sparse,hohage2017characterizations,hohage2019optimal}. Last but not least, approximate source conditions have been used to quantify errors of regularisation methods and were originally introduced in the Hilbert setting~\cite{hofmann2006approximate}, but quickly adapted to the Banach space setting~\cite{hein2009approximate}. They are generally weaker compared to classical source conditions, but allow to measure effectively how well the range condition can be approximated~\cite{burger2018large}.

Over the years, source conditions have been regarded as a purely theoretical tool to quantify convergence rates. In the papers cited above, there is little or no discussion on practical problems or applications where source conditions are proven to be met. Even for simple (non-trivial) inverse problems, systematic numerical studies of convergence rates for which the solutions provably verify a source condition are rare. Such examples include~\cite{ramlau2010convergence}, where the authors consider a convolution operator and show that the convergence rates derived in their work are only verified in the case of a solution fulfilling the source condition. In~\cite[Example 6.1 \& Section 6.2.1]{benning2011error}, the authors compare their theoretical error estimates with the computational errors. In order to do so, they propose two inverse problem solutions of which one can be shown to analytically satisfy the source condition while the other violates it (Example 6.1), and numerically estimate a source condition element to verify error estimates (Section 6.2.1.). More recently, in the context of statistical inverse  problems, in~\cite{bubba2021convex,bubba2022shearlet} the authors derive convergence rates in expected Bregman distance, using both classical and approximate source conditions. These are numerically verified for a tomographic problem using a solution that fulfils the source condition. Another exception is the special case of generalised eigenfunctions and singular vectors, whose theoretical and numerical computation has attracted significant interest over previous years (cf. \cite{benning2013ground,gilboa2014nonlinear,gilboa2014total,burger2015spectral,burger2016spectral,gilboa2016nonlinear,schmidt2018inverse,benning2018modern,gilboa2018nonlinear,nossek2018flows,bozorgnia2020infinity,bungert2021nonlinear} and references therein).

Motivated by the lack of a generic, practical strategy to reliably estimate source and range condition elements, 
in this paper \textbf{we propose a novel approach to compute source condition elements as the solution of a convex and differentiable minimisation
problem}. We prove that the solution is equivalent to fulfilling a classic source (or range) condition when $K$ in~\eqref{eq:InvProbl} is injective. We demonstrate the validity of our strategy by testing it numerically for two case studies: the machine learning problem of sparse polynomial regression and the image processing problem of recovering an image from a subset of its Fourier samples. Having at disposal explicit knowledge of the source condition elements has indubitable significance from a theoretical perspective: 1) the source condition element provides exact quantitative error estimates (i.e., the constants appearing in the convergence rates bounds can be computed explicitly), and 2) computing a range condition element allows us to determine the data that is required for a variational regularisation approach to  approximate the solution of an inverse problem. However, it also opens up interesting lines of research in the context of learning for inverse problems. In this work, we show how estimating a sparse source condition element can be used to devise a strategy for optimally sampling in the Fourier domain in the context of variational regularisation. Applications for such a problem can for instance be found in Magnetic Resonance Imaging (MRI) (cf. \cite{sherry2020learning}).

\textit{Our contributions}. 
The main contribution of this paper is a novel, practical approach to compute source conditions elements, with the aim of depicting the source conditions as quantitative tools in variational regularisation, rather than merely theoretical requirements.
To do so, we consider a rather general Banach space setup, and we employ tools from convex analysis to provide an alternative formulation for the source condition, together with an iterative procedure for the numerical approximation of the source condition element. 
To demonstrate the potential of this strategy in  applications, we provide a set of numerical experiments where we focus on two ill-posed inverse problems: 1D interpolation and 2D imaging. Finally, we show how a slight modification of the proposed approach forms a supervised optimal design technique by suitably identifying samples in the Fourier domain to promote desired properties of the associated source condition elements.

\textit{Structure of the paper}. The remainder of the paper is organised as follows. Section~\ref{sec:MathsPrelim} is devoted to briefly reviewing relevant theoretical concepts from convex analysis. In Section~\ref{sec:part-a}, we describe what source conditions are, why they are necessary and detail the key idea of our approach, namely, how to compute source condition elements as the solution of a convex minimisation problem. We demonstrate the performance of our methodology by a series of numerical experiments in Section~\ref{sec:numerics} with focus on machine learning. In particular, in Section~\ref{sec:optimal-sampling} we show how our strategy can be used to  obtain the optimal sampling pattern in the Fourier domain for a fixed image and given variational regularisation method. Concluding remarks and future prospects are summarised in Section~\ref{sec:conclusions}.

\section{Mathematical preliminaries}\label{sec:MathsPrelim}
In this section, we set the notation and give a brief overview of some key results from convex analysis which are useful for the rest of the paper (see \cite{ekeland1999convex}). 

For a functional $F \colon X \rightarrow \R$, the subdifferential evaluated at a point $\hat{u} \in X$ is defined as
\[
\partial F(\hat{u}) = \left\{x^* \in X^*: F(u)-F(\hat{u}) \geq \langle x^*, u-\hat{u} \rangle_* \, ,  \: \forall u\in X \right\} ,
\]
where $\langle \cdot, \cdot \rangle_*$ denotes the canonical pairing between the dual space $X^*$ and $X$.
If $F$ is also differentiable, the only element of the subdifferential $\partial F(\hat{u})$ is the Fr\'{e}chet-derivative $\nabla F(\hat{u})$. \\
The convex conjugate of a functional $F \colon X \rightarrow \R$ is defined as $F^\star \colon X^* \rightarrow \R$ such that
\[
F^\star \colon p \in X^* \mapsto F^\star(p) = \sup_{ u \in X}\left\{ \langle p,u \rangle_* - F(u) \right\}.  
\]

If $X$ is a Hilbert space, we can interpret $F^\star$ as a functional from $X$ to $\R$, by identifying $X$ with $X^*$ via the Riesz's representation theorem. \\
For a continuous convex functional $F$, the Bregman distance between $u$ and $w$, associated with $p \in \partial F(w)$ is defined as
\[
D_F^p(u,w) = F(u) - F(w) - \langle p, u-w \rangle_*.
\]
Such a mapping is actually not a distance: it is non-negative but, in general, it is not symmetric and it does not satisfy the  triangle inequality. To recover the symmetry, we usually rely on the symmetric Bregman distance, also known as the Jeffrey's distance, namely:
\[ 
D_F^{\text{symm},q,p}(u, w) = D_F^{q}(w,u) + D_F^{p}(u, w), \qquad q \in \partial F(u), \ p \in \partial F(w).
\] 
Notice that if $F$ is differentiable the subdifferentials are single-valued, hence we can drop the dependence  of $p,q$ and simply denote by $D_F(u,w)$ the symmetric Bregman distance between $u$ and $w$.

\section{Computing source condition elements}\label{sec:part-a}

We consider the inverse problem \eqref{eq:InvProbl} of recovering $u^\dag \in X$ from $f^\delta \in Y$, a perturbation of the noiseless measurements $f = K u^\dag$ satisfying $\| f^\delta - f \|_{Y} \leq \delta$ for some given noise level $\delta \geq 0$.
The forward map $K \colon X \rightarrow Y$ is a bounded linear operator, but we allow it to be compact and hence,  do not assume that it admits a bounded inverse in general. As a consequence, retrieving $u^\dag$ from $f^\delta$ is an ill-posed problem, and solving \eqref{eq:InvProbl} can suffer from instability in particular. The goal of regularisation theory for inverse problems is to construct a (family of) continuous operators that provide a satisfactory approximation of the potentially discontinuous inverse map. One of the most prominent paradigms is represented by variational regularisation, where a family of (potentially set-valued) operators $R_\alpha$, parameterised by $\alpha \in (0, \infty)$, is defined as
\begin{align}
R_\alpha\colon Y \rightrightarrows X, \quad R_\alpha \colon f \mapsto u_\alpha \in \argmin_{u \in X} \left\{ \frac{1}{2}\|Ku-f\|_Y^2 + \alpha J(u) \right\},\label{eq:variational-regularisation}
\end{align}
with $J\colon X \rightarrow \R \cup \{ \infty \}$ being a so-called regularisation functional. For the remainder of this work, we make the following set of assumptions.
\begin{assumption} 
In line with \cite{benning2018modern}, we assume that the following conditions are satisfied:
\begin{itemize}
    \item $Y$ is a Hilbert space;
    \item $X$ is the dual of a normed space $\mathcal{X}$ such that its weak-star topology on $X$ is metrisable on bounded sets;
    \item $K$ is the adjoint of a bounded linear operator from $Y$ to $\mathcal{X}$;
    \item $J$ is the conjugate of a proper functional from $\mathcal{X}$ to $\R \cup \{\infty\}$ and is non-negative;
    \item for every $f \in Y$ and $\alpha > 0$ there exists a constant $c = c(a, b, \| f \|)$ that depends monotonically non-decreasing on all arguments such that $\|u\|_X \leq c$ if $\|Ku - f \| \leq a$ and $J(u) \leq b$.
\end{itemize} \label{ass:1}
\end{assumption}
Notice in particular that the assumption on $J$ implies that it is a convex functional. Under those assumptions, it is possible to prove that \eqref{eq:variational-regularisation} is well defined: namely, that for every $f \in Y$ and $\alpha >0$ the set $R_\alpha(f)$ is non-empty (see \cite[Theorem 5.6]{benning2018modern}, which also shows that $R_\alpha(f)$ is convex). Moreover, for $\alpha > 0$, each operator $R_\alpha$ is stable in the sense of the Kuratowski limit superior: namely, for every sequence $f_n \rightarrow f$ in $Y$ there exists a subsequence
$u_{n_k} \in R_\alpha(f_{n_k})$ converging to an element $u^* \in R_\alpha(f)$ in the weak-star topology on $X$ (see \cite[Theorem 5.7]{benning2018modern}).
To conclude that $R_\alpha$ is a family of regularisers, one needs to ensure that, if the noise level $\delta$ converges to zero, i.e., $\delta  \searrow 0$, there exists a choice $\alpha(\delta)$ such that $R_{\alpha(\delta)}(f^\delta)$ converges to $u^\dag$. More advanced results in this direction also provide convergence rates, usually at the price of an additional assumption involving the unknown solution $u^\dag$. In particular, an important  tool to obtain quantitative error estimates between the solution $u^\dagger$ and the regularized solution $u_\alpha$ is represented by  \textit{source conditions}. In their most classical formulation, they require to assume that there exists $v \in Y$ such that
\begin{align}
    K^\ast v \in \partial J(u^\dagger), \label{eq:sc}\tag{SC}
\end{align}
where $K^\ast\colon Y \rightarrow X^*$ is the adjoint of $K$ (thanks to Riesz representation lemma, we identify $Y$ with its dual), and $\partial J \subset X^*$ is the subdifferential of $J$ (see Section \ref{sec:MathsPrelim}).
The object $v \in Y$ is referred to as the source condition element: in the following, we want to recall why elements satisfying the source condition  allow us to derive error estimates with convergence rates  (Section \ref{sec:error-estimates}) and then discuss how to compute $v$ for given $u^\dagger$, if such $v$ exists, in Section \ref{sec:sc-minimisation}.

\subsection{Error estimates for variational regularisation methods}\label{sec:error-estimates}

In \cite{benning2018modern,burger2004convergence}, error estimates measured in Bregman distances have been established for convex but non-smooth variational regularisation methods with the help of source conditions. For reasons of self-containment, we recall one of the key results. 

We start by an equivalent formulation of \eqref{eq:variational-regularisation} by means of first-order optimality conditions, exploiting the convexity of $J$: namely, $u_\alpha \in R_\alpha(f^\delta)$ if and only if
\[
    \exists \, p_\alpha \in \partial J(u_\alpha): \quad  K^\ast( K u_\alpha - f^\delta ) + \alpha p_\alpha = 0,
\]
where the previous equality holds in $X^*$. Assuming that \eqref{eq:sc} is satisfied, we can subtract $\alpha K^\ast v$ on both sides of the equation to obtain
\[
    K^\ast( K u_\alpha - f^\delta ) + \alpha \left(  p_\alpha - K^\ast v \right) = - \alpha K^\ast v .
\]    
Taking a dual product of both sides of this equation with $u_\alpha - u^\dagger$ then yields
\[
    \langle Ku_\alpha - f^\delta, Ku_\alpha - f \rangle_Y + \alpha \langle u_\alpha - u^\dagger, p_\alpha - K^\ast v \rangle_* = - \alpha \langle K^\ast v, u_\alpha - u^\dagger \rangle_* \,.
\]
Please note that $\langle\cdot,\cdot \rangle_Y$ indicates the inner product in $Y$, whereas $\langle\cdot,\cdot \rangle_*$ denotes the pairing between a functional in $X^*$ and an element of $X$.
We can interpret the second term on the left-hand side by means of the symmetric Bregman distance. Since $p_\alpha \in \partial J(u_\alpha)$ and $K^\ast v \in \partial J(u^\dagger)$, the quantity $\langle u_\alpha - u^\dagger, p_\alpha - K^\ast v \rangle_*$ equals $D_J^{\text{symm},p_\alpha,K^*v}(u_\alpha,u^\dag)$ (in the following we only refer to this term as $D_J^{\text{symm}}$ for ease of notation). In addition, we can reformulate $\langle Ku_\alpha - f^\delta, Ku_\alpha - f \rangle_Y$  as \[
\langle Ku_\alpha - f + f - f^\delta, Ku_\alpha - f \rangle_Y = \frac12 \| Ku_\alpha - f \|_Y^2 + \frac12 \| Ku_\alpha - f^\delta \|_Y^2 - \frac12 \| f - f^\delta \|_Y^2,
\]
to obtain
\[
    \frac12 \| Ku_\alpha - f \|_Y^2 + \frac12 \| Ku_\alpha - f^\delta \|_Y^2 + \alpha D_J^{\text{symm}}(u_\alpha, u^\dagger) = \frac12 \| f - f^\delta \|_Y^2 - \alpha \langle v, Ku_\alpha - f \rangle_Y ,
\]
where we have made use of \eqref{eq:InvProbl}, which assumes the identity $Ku^\dagger = f$. Using the identity 
\[
\langle \alpha  v, f - Ku_\alpha \rangle_Y = \frac{\alpha^2}{2} \| v \|_Y^2 + \frac12 \| Ku_\alpha - f \|_Y^2 - \frac12 \| \alpha v - f + Ku_\alpha \|_Y^2
\]
and subsequently eliminating $\frac12 \| Ku_\alpha - f \|_Y^2$ on both sides leads to the equation 
\[
    \frac12 \| Ku_\alpha - f + \alpha v \|_Y^2 + \frac12 \| Ku_\alpha - f^\delta \|_Y^2 + \alpha D_J^{\text{symm}}(u_\alpha, u^\dagger) = \frac12 \| f - f^\delta \|_Y^2 + \frac{\alpha^2}{2} \| v \|_Y^2 .
\]
Dividing by $\alpha > 0$ and using the estimate $\| f - f^\delta \|_Y \leq \delta$ then yields the well-known error estimate (cf. \cite{burger2007error})
\[
    \frac{1}{2\alpha} \| Ku_\alpha - f + \alpha v \|_Y^2 + \frac{1}{2\alpha} \| Ku_\alpha - f^\delta \|_Y^2 + D_J^{\text{symm}}(u_\alpha, u^\dagger) \leq \frac{\alpha}{2} \| v \|_Y^2 + \frac{\delta^2}{2 \alpha} .
\]
With the a-priori choice $\alpha(\delta) = \delta / \| v \|_Y$ we minimise the right-hand-side of this inequality and obtain
\begin{align}
\begin{split}
    0 &\leq \frac{\| v \|_Y}{2\delta} \left\| Ku_{\alpha(\delta)} - f + \delta \frac{v}{\| v \|_Y} \right\|_Y^2 + \frac{\| v \|_Y}{2\delta} \| Ku_{\alpha(\delta)} - f^\delta \|_Y^2 + D_J^{\text{symm}}(u_{\alpha(\delta)}, u^\dagger) \\
    &\leq \| v \|_Y \delta .
    \end{split}\label{eq:error-estimate-linear}
\end{align}
Hence, the symmetric Bregman distance between $u_{\alpha(\delta)}$ and $u^\dagger$ is bounded by the worst-case data error bound $\delta$ multiplied by the norm of the source condition element $v$. To quantify a-priori error estimates such as  \eqref{eq:error-estimate-linear} it is therefore vital to quantify the norm of the source condition element.

\subsection{Casting the computation of source condition elements as convex minimisation problems}\label{sec:sc-minimisation}
Having established the need for source condition elements and estimates for their norm in order to quantify a-priori error estimates of regularised inverse problems solutions, we now discuss how we can formulate the computation of $v$ verifying \eqref{eq:sc} as a variational minimisation problem for a suitable convex and differentiable functional.

We first assume, in analogy to \cite{burger2007error}, that the space $X$ is continuously embedded into a Hilbert space $Z$. By means of the Hahn-Banach theorem, $K$ can be extended to a bounded linear operator from $Z$ to $Y$ (which, with an abuse of notation, we still denote as $K$), whereas we interpret the functional $J$ as acting on the whole space $Z$ by extending it to $\infty$ outside of $X$. By this modification, we can equivalently solve the minimisation problem \eqref{eq:variational-regularisation} in $X$ or in $Z$.

\begin{remark}
To rigorously verify the last statement, assume that $\tilde{K}\colon Z \rightarrow Y$ and $\tilde{J} \colon Z \rightarrow \R$ are the proposed extensions of $K$ and $J$. Then, for any $f \in Y$, the problem
\[
\tilde{u}_\alpha \in \tilde{R}_\alpha(u) = \argmin_{u \in Z} \left\{\frac{1}{2}\|\tilde{K}u - f \|_Y^2 + \alpha \tilde{J}(u)\right\}
\]
is such that $\tilde{R}_\alpha(f) = R_\alpha(f)$. Moreover, the optimality conditions are equivalent, since it is possible to show that, for $\hat{u} \in X$, the set $\partial \left\{\frac{1}{2}\|\tilde{K}\cdot - f \|_Y^2 + \alpha \tilde{J}\right\} (\hat{u}) \subset Z$ can be identified (via bounded extension) with the set $\partial \left\{\frac{1}{2}\|K\cdot - f \|_Y^2 + \alpha J\right\} (\hat{u}) \subset X^*$.
\end{remark}

Motivated by \cite{wang2022lifted}, we provide a characterisation of the source condition element involving the use of proximity operators. For a proper, convex, lower semi-continuous functional $F$ defined on the Hilbert space $Z$, the proximity operator (or proximal map) is defined as
\begin{equation}
    \prox_F\colon Z \rightarrow Z, \qquad \prox_F(z) := \argmin_{u \in Z} \left\{ \frac12 \| u - z \|_Z^2 + F(u) \right\}\,.
    \label{eq:proximal}
\end{equation}
The Hilbert space structure is, in principle, not needed for the definition of the proximal operator: for example, if $X$ is assumed to be a Banach space, its norm can be used inside \eqref{eq:proximal}, which would not lead to an equivalent operator. We nevertheless rely on definition \eqref{eq:proximal} and on the use of the norm $\| \cdot \|_Z$. The following result provides an alternative formulation of \eqref{eq:sc}.

\begin{proposition} \label{prop:mod-sc}
The source condition \eqref{eq:sc} can be rewritten as 
\begin{equation}
    u^\dagger = \prox_J\left(u^\dagger + K^\ast v\right) ,
\label{eq:mod-sc}
\end{equation}

\end{proposition}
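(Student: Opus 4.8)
The plan is to reduce the statement to the well-known characterisation of the proximity operator via the subdifferential. Recall that for a proper, convex, lower semi-continuous $F$ on a Hilbert space $Z$, the point $w = \prox_F(z)$ is characterised by the inclusion $z - w \in \partial F(w)$; this follows directly from the first-order optimality condition for the strongly convex minimisation problem in \eqref{eq:proximal}, namely $0 \in w - z + \partial F(w)$. I would state this standard fact at the outset (citing it or proving it in one line), applied with $F = J$ and $Z$ the Hilbert space into which $X$ is continuously embedded, with $J$ extended by $+\infty$ outside $X$ as described just before the proposition.

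With that tool in hand, the argument is a two-line equivalence. First I would rewrite \eqref{eq:mod-sc}, $u^\dagger = \prox_J(u^\dagger + K^\ast v)$, using the proximal characterisation with $z = u^\dagger + K^\ast v$ and $w = u^\dagger$: this is equivalent to $z - w \in \partial J(w)$, i.e. $(u^\dagger + K^\ast v) - u^\dagger \in \partial J(u^\dagger)$, which simplifies to $K^\ast v \in \partial J(u^\dagger)$ — precisely \eqref{eq:sc}. Conversely, if \eqref{eq:sc} holds, then $u^\dagger$ satisfies the optimality condition defining $\prox_J(u^\dagger + K^\ast v)$, and by uniqueness of the proximal point (strong convexity of the objective in \eqref{eq:proximal}) we get \eqref{eq:mod-sc}. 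So the equivalence is genuinely an \emph{if and only if}, and no injectivity of $K$ is needed here — that hypothesis enters only later, when one wants $v$ itself (rather than $K^\ast v$) to be identifiable.

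The one point that deserves care — and which I expect to be the only real obstacle — is the passage between the subdifferential $\partial J \subset X^\ast$ appearing in \eqref{eq:sc} and the subdifferential of the extended functional $\tilde J$ on $Z$ that is implicitly used when we write $\prox_J$. One must check that, for $u^\dagger \in X$, the element $K^\ast v$ (a priori living in $X^\ast$, with $K^\ast\colon Y \to X^\ast$) is correctly identified, via the continuous embedding $X \hookrightarrow Z$ and its dual $Z^\ast = Z \hookrightarrow X^\ast$, with the $Z$-gradient direction $z - w = K^\ast v \in Z$ that the proximal characterisation produces. This is exactly the identification already invoked in the Remark preceding the proposition for the regularised problem, so I would simply refer to that discussion: under Assumption \ref{ass:1} and the embedding $X \hookrightarrow Z$, the inclusion $K^\ast v \in \partial J(u^\dagger)$ in $X^\ast$ holds if and only if the corresponding inclusion holds for $\tilde J$ in $Z$. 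Once this identification is granted, the proof is the short chain of equivalences above, and I would write it out in a single displayed \texttt{align*}.
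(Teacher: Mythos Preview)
Your proposal is correct and follows essentially the same route as the paper: both reduce the claim to the standard proximal characterisation $u = \prox_J(z) \Leftrightarrow z - u \in \partial J(u)$ and then substitute $u = u^\dagger$, $z = u^\dagger + K^\ast v$. Your additional care about identifying $K^\ast v$ in $Z$ versus $X^\ast$ is handled in the paper simply by noting that after the extension of $K$ to $Z$ one has $K^\ast\colon Y \to Z$, so your discussion is a slightly more explicit version of the same point.
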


\begin{proof}
By Assumption \ref{ass:1}, the functional $J$ is convex, which implies that $\frac{1}{2}\| \cdot - z \|_Z^2 + J$ is strictly convex, for every $z \in Z$. Therefore, its minimiser can be determined by means of first-order optimality conditions, yielding
\begin{equation}
u = \prox_J(z) \quad \Leftrightarrow \quad z-u \in \partial J(u).
\label{eq:prox_eq}    
\end{equation}
Choosing $u = u^\dag$ and $z = u^\dag + K^* v$ (since $K$ has been extended to an operator from $Z$ to $Y$, it holds $K^*\colon Y \rightarrow Z$), we conclude \eqref{eq:mod-sc}.

\end{proof}

The advantage of the Hilbert space structure mainly resides in the possibility to use the expression \eqref{eq:prox_eq}. In Banach spaces, a similar result holds, but it requires the use of duality mappings, which would prevent the development of the following results.
From this moment on, for the ease of notation, we denote by $\|\cdot\|$ the norm on the Hilbert space $Z$, previously denoted as $\| \cdot \|_Z$, and as $\langle \cdot, \cdot \rangle$ the inner product in $Z$.

We now reformulate the source condition by means of the following functional, which, following \cite{wang2022lifted}, we refer to as the \textit{Bregman loss}:
\begin{align}
    B_J(u,p) := \left( \frac12 \| \cdot \|^2 + J\right)(u) + \left( \frac12 \| \cdot \|^2 + J\right)^\star(p) - \langle p, u \rangle  ,\label{eq:bregman-objective}
\end{align}
where $( \frac12 \| \cdot \|^2 + J )^\star$ denotes the convex conjugate of $\frac12 \| \cdot \|^2 + J$.

We provide a summary of results related to $B_J$, which provide possible interpretations as well as important properties which will be used later. A proof of the following proposition can be found in \cite[Theorem 10]{wang2022lifted}.
\begin{proposition}
The functional $B_J \colon Z \times Z \rightarrow \R$ satisfies the following properties:
\begin{enumerate}[a)]
    \item $B_J(u,p) = D_{\frac{1}{2}\| \cdot\|^2+J}^p(u, \prox_J(p))$;
    \item $B_J(u,p) = \frac{1}{2}\| u - \prox_J(p)\|^2 + D_J^{p-\prox_J(p)}(u,\prox_J(p))$;
    \item $B_J$ is bi-convex (separately in the variables $u,p$);
    \item for any fixed $u \in \dom(J)$, the function $B_J(u,\cdot)$ is continuously Fr\'{e}chet-differentiable: in particular,
    \[
    \nabla_p B_J(u,p) = \prox_J(p) - u;
    \]
    \item if moreover $\partial J(u) \neq \varnothing$, then $u = \prox_J(p)$ is a global minimiser of $B_J(u,\cdot)$.
\end{enumerate}
\label{prop:BregLoss}
\end{proposition}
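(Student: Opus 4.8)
I would prove all five items at once by introducing the auxiliary functional $G := \tfrac12\|\cdot\|^2 + J$ on $Z$. Under the standing assumptions (in particular the hypotheses on $J$ and the embedding $X \hookrightarrow Z$), $J$ is a proper, convex and lower semicontinuous, non-negative functional on $Z$, so $G$ is proper, convex, lower semicontinuous and, crucially, $1$-strongly convex; moreover $G^\star$ is finite on all of $Z$ since $J \geq 0$. With this notation,
\[
B_J(u,p) = G(u) + G^\star(p) - \langle p, u\rangle
\]
is exactly the Fenchel--Young gap of $G$: it is non-negative for every $u \in \dom(J)$ and $p \in Z$, and it vanishes at $(u,p)$ if and only if $p \in \partial G(u)$. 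The one fact I would establish first, from which everything else follows, is
\[
\prox_J(p) = \argmin_{u \in Z}\Bigl\{ \tfrac12\|u - p\|^2 + J(u)\Bigr\} = \argmin_{u \in Z}\bigl\{ G(u) - \langle p, u\rangle\bigr\} = \nabla G^\star(p) ,
\]
where the middle equality is completing the square, and the last one is the standard Hilbert-space duality fact that the conjugate of a $1$-strongly convex functional is Fr\'echet-differentiable with gradient equal to the unique maximiser in the definition of the conjugate. This in particular gives $G^\star(p) = \langle p, \prox_J(p)\rangle - G(\prox_J(p))$, i.e.\ $p \in \partial G(\prox_J(p))$, and, via the sum rule $\partial G(w) = w + \partial J(w)$ together with \eqref{eq:prox_eq}, the companion inclusion $p - \prox_J(p) \in \partial J(\prox_J(p))$.

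Given this, a) and b) are direct computations. For a) I would expand $D_G^p(u,\prox_J(p)) = G(u) - G(\prox_J(p)) - \langle p, u - \prox_J(p)\rangle$ and substitute $-G(\prox_J(p)) + \langle p, \prox_J(p)\rangle = G^\star(p)$, obtaining $G(u) + G^\star(p) - \langle p, u\rangle = B_J(u,p)$. For b) I would split the Bregman distance of the sum $G = \tfrac12\|\cdot\|^2 + J$ at the base point $w = \prox_J(p)$ along the decomposition $p = w + (p - w)$ of the subgradient, which is legitimate because $w = \nabla\bigl(\tfrac12\|\cdot\|^2\bigr)(w)$ and $p - w \in \partial J(w)$; this gives $D_G^p(u,w) = D_{\frac12\|\cdot\|^2}^{\,w}(u,w) + D_J^{\,p-w}(u,w)$, and since $D_{\frac12\|\cdot\|^2}^{\,w}(u,w) = \tfrac12\|u-w\|^2$, combining with a) yields the claim.

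Parts c)--e) are then short. For c): $B_J(\cdot,p) = G(\cdot) + \text{const}$ is convex because $G$ is, and $B_J(u,\cdot) = G^\star(\cdot) - \langle \cdot, u\rangle + \text{const}$ is convex because $G^\star$ is a convex conjugate. For d): differentiability of $B_J(u,\cdot)$ reduces to that of $G^\star$, already obtained, with $\nabla_p B_J(u,p) = \nabla G^\star(p) - u = \prox_J(p) - u$; continuity of this gradient follows from $\prox_J$ being firmly nonexpansive, hence $1$-Lipschitz. For e): if $\partial J(u) \neq \varnothing$ then $\partial G(u) = u + \partial J(u) \neq \varnothing$; any $p^\ast \in \partial G(u)$ obeys $p^\ast - u \in \partial J(u)$, hence $u = \prox_J(p^\ast)$ by \eqref{eq:prox_eq}, and $B_J(u,p^\ast) = 0$ because $p^\ast \in \partial G(u)$; since $B_J(u,\cdot) \geq 0$, this $p^\ast$ is a global minimiser (equivalently, this is immediate from b), both summands vanishing when $u = \prox_J(p)$).

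The step requiring genuine care is not the algebra but the infinite-dimensional convex-analytic bookkeeping underpinning the preliminary paragraph: verifying that the hypotheses on $J$ (and the passage to $Z$) indeed make $J$ — and hence $G$ — proper, convex and lower semicontinuous on the Hilbert space $Z$, so that $\prox_J$ is single-valued; and invoking, in the Hilbert setting, the two facts used as black boxes, namely that the conjugate of a strongly convex functional is Fr\'echet-differentiable with Lipschitz gradient equal to the conjugation-maximiser, and that the Moreau--Rockafellar sum rule applies to $\tfrac12\|\cdot\|^2 + J$ (which it does, $\tfrac12\|\cdot\|^2$ being finite and continuous everywhere). Once these are in place, a)--e) follow essentially by the computations sketched above; throughout I would restrict to $u \in \dom(J)$, as is implicit in the statement, so that $B_J(u,p)$ is finite.
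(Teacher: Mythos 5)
The paper does not contain its own proof of this proposition: it defers entirely to the cited reference (Theorem~10 of \cite{wang2022lifted}), so there is no in-paper argument to compare yours against. Your self-contained proof is correct and is the natural convex-duality route one would expect that reference to take: everything hinges on the identification $\prox_J = \nabla G^\star$ for the $1$-strongly convex $G = \tfrac12\|\cdot\|^2 + J$, from which $p \in \partial G(\prox_J(p))$ and $p - \prox_J(p) \in \partial J(\prox_J(p))$ follow, and a)--e) reduce to the Fenchel--Young gap computation, the additivity of Bregman distances under the Moreau--Rockafellar sum rule, and non-negativity of $B_J$. You are also right to flag that the only genuinely delicate point is the bookkeeping ensuring $J$ remains proper, convex and lower semicontinuous after extension to $Z$ so that $\prox_J$ is single-valued; the paper itself glosses over this, so no gap is introduced relative to what the authors assume.
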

\begin{remark}
Note that, if $u \in \dom(J)$ but $J(u)=\varnothing$, an element $\hat p$ such that $\nabla_p B_J\left(u,\hat p\right) = 0$ may not exist. This scenario never occurs if $u \in \operatorname{int}(\dom(J))$, and in particular when the domain of $J$ is a closed subset of $Z$. 
\end{remark}

In view of Proposition \ref{prop:BregLoss}, we focus on the problem of minimising $B_J(u,\cdot)$ for a fixed $p$. According to $a)$ and $b)$, this accounts to finding an object $\hat{p}$ such that $\prox_J(\hat{p})$ is close to $u$ and in particular, if the minimum is attained that $u = \prox_J(\hat{p})$. Moreover, thanks to the convexity and Fr\'{e}chet-differentiability of $B_J(u,\cdot)$, its minimisation of $B_J(u,\cdot)$ can be performed by means of first-order optimisation methods.

\noindent Combining ideas from Propositions \ref{prop:mod-sc} and \ref{prop:BregLoss}, we introduce the functional
\[
G_J \colon Y \rightarrow \R, \quad G_J(v) = B_J(u^\dag,u^\dag+K^*v).
\]
Since $G_J$ is a composition of an affine map with $B_J(u^\dag,\cdot)$, it inherits the convexity and differentiability, and in particular
\begin{equation}
\nabla G_J(v) = K \prox_J(u^\dagger + K^* v)- Ku^\dagger.
    \label{eq:gradient}
\end{equation}

\noindent We immediately deduce the following result, related to the source condition:
\begin{proposition} \label{prop:sc3}
If there exists $v$ which satisfies \eqref{eq:sc}, then such $v$ is a global minimiser of $G_J$. Viceversa, if $v$ is a global minimiser of $G_J$, it satisfies

\begin{equation}
 K u^\dagger = K \prox_J\left(u^\dagger + K^\ast v\right),
    \label{eq:sc_kernel}
\end{equation}
which automatically implies \eqref{eq:sc} if $K$ only has a trivial null space.
\end{proposition}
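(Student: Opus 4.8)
The plan is to derive both implications from Proposition~\ref{prop:BregLoss} applied to $B_J(u^\dag,\cdot)$, composed with the affine map $v\mapsto u^\dag+K^*v$. First I would show the forward implication. Suppose $v$ satisfies \eqref{eq:sc}, i.e.\ $K^*v\in\partial J(u^\dag)$; in particular $\partial J(u^\dag)\neq\varnothing$. By part~e) of Proposition~\ref{prop:BregLoss}, for the point $p=u^\dag+K^*v$ we have that $u^\dag=\prox_J(p)$ is a global minimiser of $B_J(u^\dag,\cdot)$, provided we can check that $p$ is indeed the argument at which the Bregman loss vanishes. Concretely, by part~d), $\nabla_p B_J(u^\dag,p)=\prox_J(p)-u^\dag$, and Proposition~\ref{prop:mod-sc} tells us exactly that \eqref{eq:sc} is equivalent to $u^\dag=\prox_J(u^\dag+K^*v)$, so $\nabla_p B_J(u^\dag,u^\dag+K^*v)=0$. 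Since $B_J(u^\dag,\cdot)$ is convex (part~c)) and Fréchet-differentiable, a stationary point is a global minimum; equivalently $B_J(u^\dag,p)=D_J^{p-\prox_J(p)}(u^\dag,\prox_J(p))=0$ by part~b) with $\prox_J(p)=u^\dag$. Composing with the affine map, $\nabla G_J(v)=K\prox_J(u^\dag+K^*v)-Ku^\dag=0$ by \eqref{eq:gradient}, and $G_J$ is convex, so $v$ is a global minimiser of $G_J$, with $G_J(v)=B_J(u^\dag,u^\dag+K^*v)=0$.

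For the converse, let $v$ be a global minimiser of $G_J$. Since $G_J$ is convex and differentiable, this is equivalent to the first-order condition $\nabla G_J(v)=0$, which by \eqref{eq:gradient} reads $K\prox_J(u^\dag+K^*v)=Ku^\dag$, i.e.\ \eqref{eq:sc_kernel}. If in addition $K$ has trivial null space, then $K w=0$ implies $w=0$, so from $K(\prox_J(u^\dag+K^*v)-u^\dag)=0$ we conclude $\prox_J(u^\dag+K^*v)=u^\dag$, and Proposition~\ref{prop:mod-sc} (via \eqref{eq:prox_eq}) turns this back into $K^*v\in\partial J(u^\dag)$, which is precisely \eqref{eq:sc}.

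The only subtlety — and the place I would be most careful — concerns whether $G_J$ actually attains a minimum, and whether ``global minimiser'' can be characterised purely by the vanishing gradient. Convexity plus Fréchet-differentiability gives that $v$ is a global minimiser \emph{if and only if} $\nabla G_J(v)=0$, so no compactness or coercivity argument is needed for the \emph{equivalence}; what the proposition does not (and need not) claim is that such a minimiser exists — existence of a minimiser of $G_J$ is, in view of Proposition~\ref{prop:sc3} itself, essentially equivalent to solvability of a relaxed source condition. A second minor point, flagged in the remark after Proposition~\ref{prop:BregLoss}, is that in the forward direction one uses $\partial J(u^\dag)\neq\varnothing$, which is automatic here because \eqref{eq:sc} explicitly exhibits the element $K^*v$ of the subdifferential; so the pathological case ``$u^\dag\in\dom(J)$ but $\partial J(u^\dag)=\varnothing$'' does not obstruct the argument. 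Beyond these observations the proof is a direct chaining of the already-established identities $\nabla G_J(v)=K\prox_J(u^\dag+K^*v)-Ku^\dag$ and the prox–subdifferential equivalence \eqref{eq:prox_eq}.
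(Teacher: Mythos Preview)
Your proposal is correct and matches the paper's (implicit) argument: the paper does not spell out a proof but introduces the proposition with ``We immediately deduce the following result'', relying precisely on the convexity and Fr\'echet-differentiability of $G_J$ together with the gradient formula \eqref{eq:gradient} and the prox--subdifferential equivalence from Proposition~\ref{prop:mod-sc}. Your write-up simply makes these immediate steps explicit, and your closing remarks on non-existence of minimisers and the non-emptiness of $\partial J(u^\dag)$ line up with the paper's own Remarks following the proposition.
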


\begin{remark}
Please note that Proposition \ref{prop:sc3} does not guarantee the existence of a global minimiser of $G_J$.

\end{remark}

\begin{remark}
    We want to emphasise that if $K$ is not injective and has a non-trivial null space, an element $v$ that sets \eqref{eq:gradient} to zero can still satisfy \eqref{eq:sc} as long as $\prox(u^\dag + K^\ast v) - u^\dag$ does not lie in the null space of $K$. Furthermore, we can always verify if $v$ satisfies \eqref{eq:sc} a-posteriori.
\end{remark}

In order to check if $u^\dagger$ satisfies the source condition, and to compute the associated $v$, we are interested in solving
\begin{align*}
    \hat v = \argmin_{v} G_J(v) = \argmin_{v} B_J\left(u^\dagger, u^\dagger + K^\ast v\right) .
\end{align*}
We can solve this minimisation problem with various different first-order methods, the simplest one being gradient descent. Thanks to \eqref{eq:gradient}, the iterative step in this case reads 
\begin{align} \label{eq:graddesc}
    v^{k + 1} = v^k - \tau K \left( \text{prox}_J\left( u^\dagger + K^\ast v^k \right) - u^\dagger \right) ,
\end{align}
which is globally convergent for $\tau \leq 1/\| K\|^2$ for arbitrary initial value $v^0 \in Y$, since every proximal map is $1$-Lipschitz.
\par

The descent algorithm described in \eqref{eq:graddesc} allows to approximate a source condition element by means of simple iterations. Nevertheless, it requires the knowledge of the proximal map $\prox_J$, which has a closed-form solution for many popular choices of the functional $J$. However, many interesting functionals do not have closed-form proximal maps, such as the popular class of functionals of the form $J(u) = H(Au)$, where $H$ is a functional with closed-form proximal map but where $A$ is a linear operator. In the next section, we provide a strategy to deal with this larger family of regularisation functionals. An alternative approach, at least from a numerical perspective, is to approximate the evaluation of $\prox_J$ by means of a suitable optimisation sub-routine.

\subsection{Extension to more general functionals and range conditions}\label{sec:rc-condition}

In this section, we consider composite functionals of the form
\begin{equation}
J(u) = H(Au + b),
    \label{eq:composition}
\end{equation}
with $b\in Z$ and $A\colon Z \rightarrow Z$, and for which we assume that $\prox_H$ is known. This allows to take into account a large family of regularisation strategies, including generalised Tikhonov, sparsity promotion with respect to orthogonal bases or frames, and Total Variation, as we are going to recall in Section \ref{subsec:fourier}.
\par
In order to estimate source condition elements in this scenario, we start by relating the source condition with the \textit{range condition}. Referring to \cite[Definition 4.9]{benning2018modern}, we say that $u^\dag$ satisfies the range condition for $\alpha>0$ if there exists data $g_\alpha \in Y$ such that 
\begin{align}
    u^\dagger \in \argmin_{u \in X} \left\{ \frac{1}{2}\| Ku - g_\alpha \|^2 + \alpha J(u) \right\} ,\tag{RC1}\label{eq:range-condition}
\end{align}
which means that $u^\dag$ is in the range of the regulariser $R_\alpha$, which motivates the name of the assumption. Thanks to Assumption \ref{ass:1} and to the definition of $R_\alpha$ in \eqref{eq:variational-regularisation}, we can apply \cite[Theorem 5.12]{benning2018modern} and conclude that $u^\dagger$ satisfies the range condition \eqref{eq:range-condition} for all $\alpha > 0$ if and only if it verifies the source condition \eqref{eq:sc}. In particular, considering the equivalent optimality condition associated with \eqref{eq:range-condition}, 
\begin{align}
    \frac{1}{\alpha} K^\ast \left( g_\alpha - Ku^\dagger \right) \in \partial J(u^\dagger) , \tag{RC2}
    \label{eq:range-condition-optimality}
\end{align}
we immediately obtain the following expression connecting the source condition element $v$ with the pre-image $g_\alpha$:
\begin{equation}
g_\alpha = K u^\dag + \alpha v.
    \label{eq:sc_rc}
\end{equation}
Even though this equivalence holds true for any feasible $J$, it is particularly useful when applied to functionals of the form \eqref{eq:composition}. Indeed, in this case

the range condition \eqref{eq:range-condition-optimality} can be written as
\begin{align*}
    K^\ast(Ku^\dagger - g_\alpha) + \alpha A^\ast q^\dagger = 0 ,
\end{align*}
for $q^\dagger \in \partial H(Au^\dagger + b)$. Multiplying by $1/\alpha$ and using \eqref{eq:sc_rc}  we obtain the conditions
\begin{subequations}
\begin{align}
    K^\ast v &= A^\ast q^\dagger ,\label{eq:dual-rc1}\\
     q^\dagger &\in \partial H(A u^\dagger + b) .\label{eq:dual-rc2}
\end{align}\label{eq:dual-rc}
\end{subequations}
In analogy to Section \ref{sec:sc-minimisation}, we can rewrite \eqref{eq:dual-rc2} as $A u^\dagger + b + q^\dagger \in \partial \left( \frac12 \| \cdot \|^2 + H\right)(Au^\dagger + b)$, respectively $Au^\dag + b = \prox_H(Au^\dag + b + q^\dag)$, set up a Bregman loss functional and define the corresponding functional $G_H\colon Y \rightarrow \R$ as
\begin{align*}
    G_{H}(q) = B_{H}(Au^\dagger + b, q + Au^\dagger + b) .
\end{align*}
Then, we can formulate estimating the source condition element $v$ and the subgradient $q^\dagger$ as the minimisation of the objective functional
\begin{align}
    E_{H}(v, q^\dagger) = \frac12 \| K^\ast v - A^\ast q^\dagger \|^2 + G_{H}(q^\dagger) .\label{eq:rc-minimisation}
\end{align}
Indeed, setting the partial Fr\'{e}chet derivatives of \eqref{eq:rc-minimisation} to zero yields
\begin{align*}
    K \left( K^\ast v - A^\ast q^\dagger \right) &= 0 , \\
    A \left( A^\ast q^\dagger - K^\ast v \right) + \text{prox}_H\left(Au^\dagger + b + q^\dagger \right) - Au^\dagger - b &= 0 .
\end{align*}

The above expressions are not equivalent to the range condition \eqref{eq:range-condition}. Nevertheless, as in Proposition \ref{prop:sc3}, if we assume that $K$ has a trivial null space, then the first expression implies that \eqref{eq:dual-rc1} is satisfied, whereas the second one implies \eqref{eq:dual-rc2}.
Moreover, we can always check a-posteriori if $v$ and $q^\dagger$ satisfy \eqref{eq:dual-rc}.

In order to approximate a minimiser of $E_{H}$, we again can use first order methods. Since minimising \eqref{eq:rc-minimisation} is a minimisation problem in two variables, it seems logical to use algorithms such as explicit coordinate descent \cite{beck2013convergence,wright2015coordinate}, i.e., 
\begin{equation}
\begin{aligned}
v^{k+1} &= v^k - \tau K(K^* v^k - A^* q^k) ,\\
q^{k+1} &= q^k - \sigma\Big( A(A^* q^k - K^* v^{k+1}) + \prox_H(Au^\dag + b + q^k) - A u^\dag - b \Big),
    \label{eq:coord_desc}
\end{aligned}
\end{equation}
which guarantees the (global) convergence to a minimiser, provided that the the step sizes $\sigma,\tau$ are suitably chosen. Alternatively, one can formulate the augmented Lagrangian
\begin{align*}
    \mathcal{L}_\delta(v, q^\dagger; \mu) = \frac{\delta}{2} \| K^\ast v - A^\ast q^\dagger \|^2 + G_{H}(q^\dagger) + \langle \mu, K^\ast v - A^\ast q^\dagger \rangle 
\end{align*}
and employ algorithmic approaches such as the alternating direction method of multipliers (ADMM) \cite{lions1979splitting} and variants to find the saddle-point, or directly formulate the saddle-point problem
\begin{align*}
    \inf_{v, q^\dag} \sup_{\mu} G_{H}(q^\dag) + \langle \mu, K^\ast v - A^\ast q^\dag \rangle 
\end{align*}
and compute the saddle-point with algorithms such as the primal-dual hybrid gradient method or variants of it (see \cite{chambolle2016introduction,benning2021bregman} and references therein).

In the following section, we provide some numerical examples to discuss the effectiveness of the reconstruction of source condition elements, both via the minimisation of $G_J$ or of $E_{H}$, according to the different choices of $J$. 

\section{Numerical results}\label{sec:numerics}

In this section, we demonstrate the validity of our strategy by testing it for two case studies. For each of our experiments, we present two examples. One, where we assume that the source condition is satisfied with a corresponding source condition element with small norm. And one, where we assume that the source condition is either not satisfied or only satisfied with a corresponding source condition element with large norm.

In Subsection~\ref{subsec:1Dpoly} we consider the machine learning problem of polynomial LASSO regression, while in Subsection~\ref{subsec:fourier} we consider the problem of estimating a two-dimensional image from a subset of its Fourier samples. Finally, in Subsection~\ref{sec:optimal-sampling} we show how our strategy can be used to learn the optimal sampling pattern in the Fourier domain for a fixed image and given variational regularisation method.

\subsection{1D example: Polynomial LASSO regression}
\label{subsec:1Dpoly}

As a first example, we show how to estimate source condition elements for a polynomial regression model. We start by considering the polynomial
\begin{align}
\label{eq:results-fx-1d-polynomial-regression_deg5}
    \varphi(u) = 5u^2-3u^5-1 , \qquad u \in \mathbb{R}.
\end{align}

We assume that we cannot directly access $\varphi(u)$, but instead have $N$ pairs of samples $(u_i, f^\delta_i)$ for $i \in \{1, \ldots, N\}$ with

\begin{align}
\label{eq:results-fx-additive-noise}
    f^\delta_i = \varphi(u_i)+\varepsilon_i, \;\;\;\; \varepsilon_i\sim\mathcal{N}(0,\sigma^2),
\end{align}

i.e., measured data is affected by additive noise $\varepsilon_i$ sampled from a Gaussian distribution with zero mean and standard deviation $\sigma$, and the superscript $\delta$ refers to the resulting data error $\delta = \| f - f^\delta \|$. We choose $\sigma=0.1$ for our experiments and an example of sampled values is shown in Figure~\ref{fig:poly-1d-res--noisy-images--bottom}.

A polynomial regression model of order $d$ that matches the $N$ sampled data pairs \eqref{eq:results-fx-additive-noise} can be written as a system of linear algebraic equations in the form 
\begin{align}
\label{eq:results-poly-1d-regression-model}
    \Phi w = f^\delta,
\end{align}
where $w\in\mathbb{R}^{(d+1)}$ are the coefficients of the polynomial, $f^\delta \in\mathbb{R}^N$ is the vector containing the noisy samples $f_i^\delta$, and $\Phi\in\mathbb{R}^{N\times(d+1)}$ is the Vandermonde matrix obtained from the values of $x_i^p$ for $p \in \{0, \ldots, d\}$. 

We use $d=75$ and $N=50$ in our experiments.

Traditionally, the goal is finding the sparse set of coefficients $w = (w_0, w_1, \dots, w_d)$ of the $d$-th order polynomial model via the LASSO model
\begin{align}
    w_\alpha \in \argmin_{w \in \R^{1 + d}} \left\{ \frac12 \| \Phi w - f^\delta \|^2 + \alpha \| w \|_1 \right\} ,\label{eq:lasso}
\end{align}
where $\| w \|_1 = \sum_{j = 0}^d | w_j |$ is the one-norm of the coefficient vector $w$.

Now instead of finding $w_\alpha$, our goal is to find data $g_\alpha$ such that the true coefficients $w^\dag$ are a solution of \eqref{eq:lasso} for data $g_\alpha$ (instead of $f^\delta$), respectively the source condition element $v$ using $\Phi^\top v \in \partial \lVert w^\dagger \rVert_1$, where the subdifferential of the one-norm is defined (component-wise) as
\begin{align*}
    (\partial \lVert w^\dagger \rVert_1)_j := 
    \begin{cases} 
     {1}    &  \text{if } w_j   >  0\\ 
     [-1,1] &  \text{if } w_j   =  0\\ 
     {-1}   &  \text{if } w_j   <  0
    \end{cases} .
\end{align*}

For our example, the coefficients $w^{\dagger}$ are zero except for $w^\dagger_0 = -1$, $w^\dagger_2 = 5$ and $w^\dagger_5 = -3$. To compute the source condition element $v$, we minimise $G_J(v)$ iteratively as shown in \eqref{eq:graddesc}, with \begin{align}
    G_{\lVert \cdot \rVert_1} (v) = B_{\| \cdot \|_1}(w^\dagger, w^\dagger + \Phi^\top v) .\label{eq:lasso-poly-1d}
\end{align}
Following \eqref{eq:gradient}, we observe 
$$\nabla G_{\lVert \cdot \rVert_1}(v) = \Phi\left( \text{prox}_{\| \cdot \|_1}\left( w^{\dagger} + \Phi^\top v \right) - w^{\dagger} \right) .$$
In this particular instance, the proximal map $\text{prox}_{\| \cdot \|_1}$ has the closed form solution 
\[\text{prox}_{\| \cdot \|_1}=\text{sign}(\cdot)\,\text{max}(\cdot - 1, 0) \: ,\]
which is also known as the soft thresholding or soft shrinkage operator. 

In order to compute the source condition element via \eqref{eq:graddesc}, we initialise $v^0=0$, and then we iteratively compute
$v^{k+1} = v^{k} - \tau \nabla G_{\lVert \cdot \rVert_1}(v^k)$, with step-size $\tau=1/\lVert \Phi \rVert^2$. Further, we accelerate this procedure by using a Nesterov accelerated version \cite{nesterov1983method} as described in \cite{benning2021bregman}. We stop iterating when either $2 \cdot 10^{8}$ iterations have passed, or when the Euclidean norm of $\nabla G_J$ is smaller than $10^{-12}$.
The norm of the computed source condition element is $\lVert v \rVert \approx 15.32$. The small norm of the source condition element indicates that we can retrieve the weights $w$ reasonably well with the LASSO model even in the presence of noise, as the worst-case error in the estimate \eqref{eq:error-estimate-linear} is not amplified too strongly.
To check that we have really computed a source condition element $v$, we need to compare $\Phi^\top v$ and $\text{sign}(w^\dagger)$. 
This comparison, together with the source condition element, is shown in the left column of Figure~\ref{fig:poly-1d-res-comparison--bottom}.

Finally, we compute the range condition $g_\alpha = \alpha v + \Phi w^{\dagger}$, where $\alpha = \delta/ \lVert v \rVert$, and we show the result in the left column of Figure~\ref{fig:poly-1d-res-RC-test--bottom}. Remember that the data $g_\alpha$ is the data that, when fed into the LASSO operator \eqref{eq:lasso} (instead of $f^\delta$), ensures that the LASSO operator returns the coefficients $u^\dagger$.\\

\noindent The same experiment is now repeated with the higher-order polynomial
\begin{align}
\label{eq:results-fx-1d-polynomial-regression-deg20}
    \varphi(u) = 5u^2-3u^5-\frac{3}{2}u^{13}+\frac{1}{2}u^{20}-1, \qquad u \in \mathbb{R},
\end{align}
and the same Vandermonde matrix $\Phi \in \mathbb{R}^{N \times (d + 1)}$ as before. As outlined in the introduction to this section, this second example is chosen because the problem of identifying coefficients of a polynomial of higher degree is more ill-conditioned compared to the previous example, and $\ell^1$-regularisation has a much more difficult task at hand. We therefore assume that either the source condition will be violated for this example, or there exists a source condition element, but with much larger norm compared to the previous example.

In what follows, $w^\dagger$ must be changed accordingly to include the additional coefficients of this polynomial. Following the previous steps, we compute the source condition element $v$ from the same initial (zero) vector, using the same iterative algorithm. 
Once the iteration is completed, we retrieve a source condition element with norm $\lVert v \rVert \approx 11359$. Convergence is obtained after approximately $32.66\cdot 10^6$ iterations, which is much more than the iterations required for the simpler model to converge (which took less than $10^4$ iterations). This indicates that the new problem is indeed harder to solve, as it takes more iterations and more time to converge to the solution. However, as the algorithm has converged, we expect that $v$ satisfies the source condition, i.e., $\Phi^\top v \in \partial w^\dagger $. This comparison, together with the source condition $v$, is shown in the right column of Figure~\ref{fig:poly-1d-res-comparison--bottom}. We can see that for both models we are able to retrieve the source condition as the two solutions both verify \eqref{eq:sc}, but we can see larger oscillations and a larger norm in the $20$th order polynomial, which indicates that the problem is harder than the $5$th order polynomial because of ill-conditioning.

Using the inequality \eqref{eq:error-estimate-linear}, we can conclude that we get a better bound for the fifth-order polynomial since the noise level $\delta=\lVert y-y^\delta \rVert$ is the same for both models. Similar to the previous example, we also visualise the corresponding range condition data $g_\alpha$ in Figure \ref{fig:poly-1d-res-RC-test--bottom}.

\begin{figure}
\label{fig:poly-1d-res--noisy-images}
\subfloat{\includegraphics[width=0.45\textwidth]{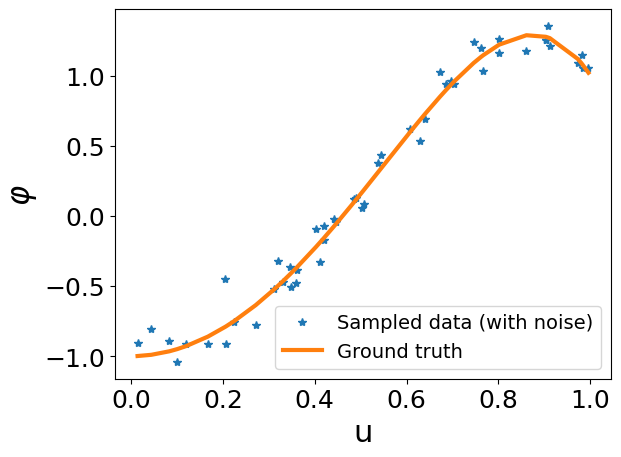}\label{subfig:poly-1d-res--noisy-images-order5}}
\hspace{0.05\textwidth}
\subfloat{\includegraphics[width=0.45\textwidth]{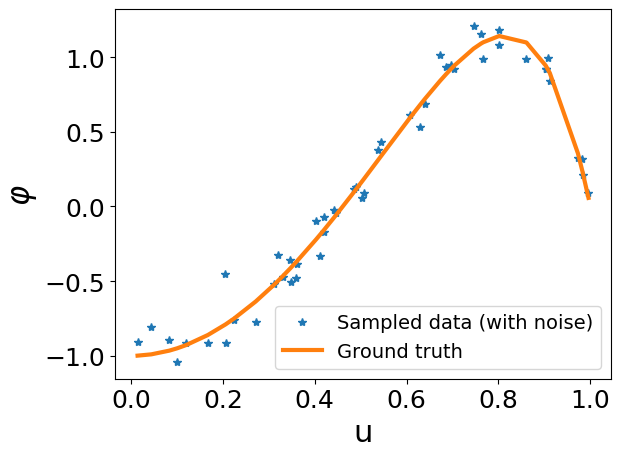}\label{subfig:poly-1d-res--noisy-images-order20}}
   \caption{Noisy sampled data \eqref{eq:results-fx-additive-noise}, for the ground truth $\varphi$ defined  by in \eqref{eq:results-fx-1d-polynomial-regression_deg5} (left) and \eqref{eq:results-fx-1d-polynomial-regression-deg20} (right).}
   \label{fig:poly-1d-res--noisy-images--bottom}
\end{figure}

\begin{figure}
\label{fig:poly-1d-res-comparison}
\subfloat{\includegraphics[width=0.45\textwidth]{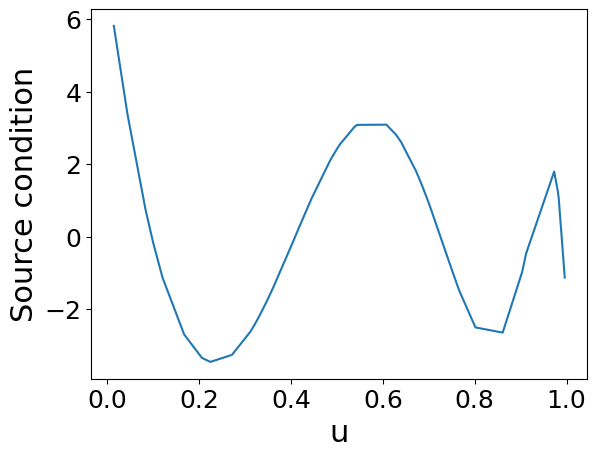}\label{subfig:poly-1d-res-SC-order5}}
\hspace{0.05\textwidth}
\subfloat{\includegraphics[width=0.45\textwidth]{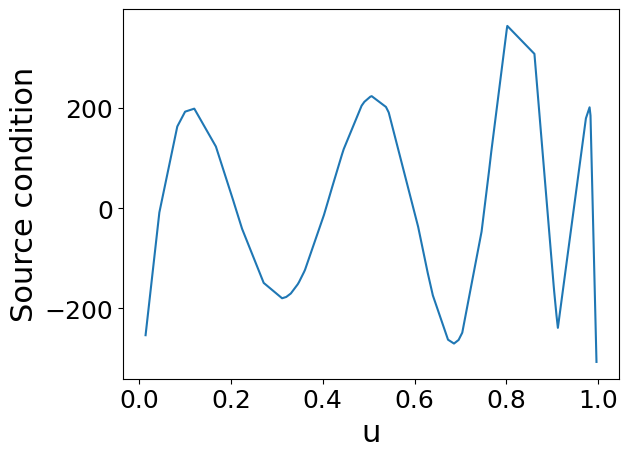}\label{subfig:poly-1d-res-SC-order20}}\\
\subfloat{\includegraphics[width=0.45\textwidth]{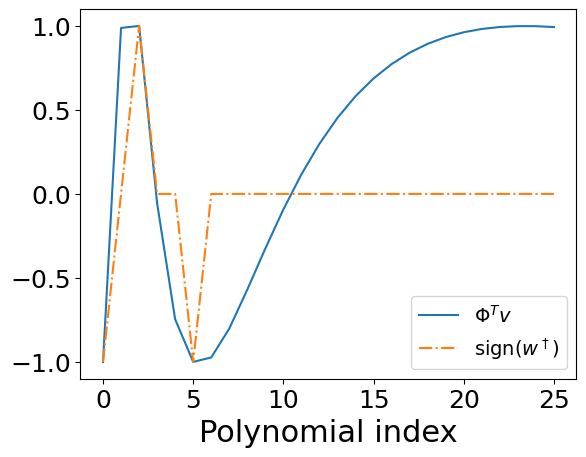}\label{subfig:poly-1d-res-comparison-order5}}
\hspace{0.05\textwidth}
    \subfloat{\includegraphics[width=0.45\textwidth]{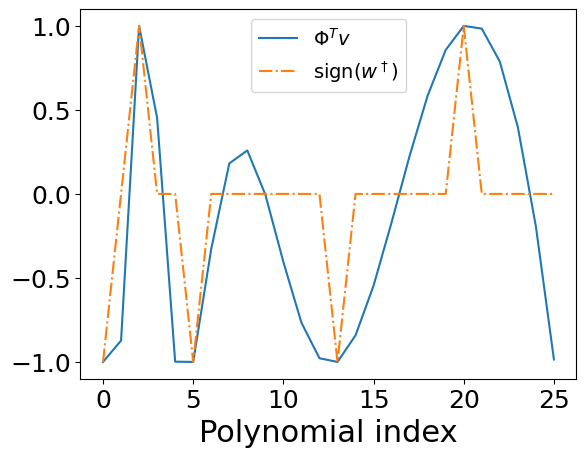}\label{subfig:poly-1d-res-comparison-order20}}
   \caption{
   Top row: source condition element $v$ computed for  examples \eqref{eq:results-fx-1d-polynomial-regression_deg5} (left) and  \eqref{eq:results-fx-1d-polynomial-regression-deg20}  (right). Note the difference in the  ordinate scales: the norm of the source condition element for the higher order polynomial \eqref{eq:results-fx-1d-polynomial-regression-deg20} is much higher it and has larger oscillations than the ones for the lower order polynomial \eqref{eq:results-fx-1d-polynomial-regression_deg5}. Bottom row: comparison between $\Phi^\top v$ and the sign of the true coefficients $w^\dagger$. For both plots, every time $\text{sign}(w^\dagger)=\pm 1$, then also $\Phi^\top v = \pm 1$. This indicates that the estimated source condition $v$ is correct.
   }
   \label{fig:poly-1d-res-comparison--bottom}
\end{figure}

\begin{figure}
\label{fig:poly-1d-res-RC-test}
\subfloat{\includegraphics[width=0.45\textwidth]{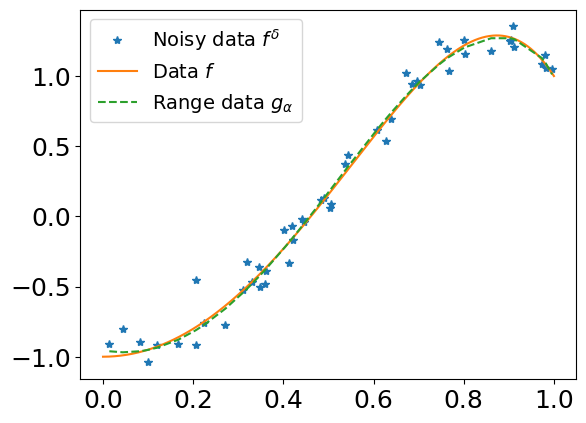}\label{subfig:poly-1d-res-RC-test-order5}}
\hspace{0.05\textwidth}
\subfloat{\includegraphics[width=0.45\textwidth]{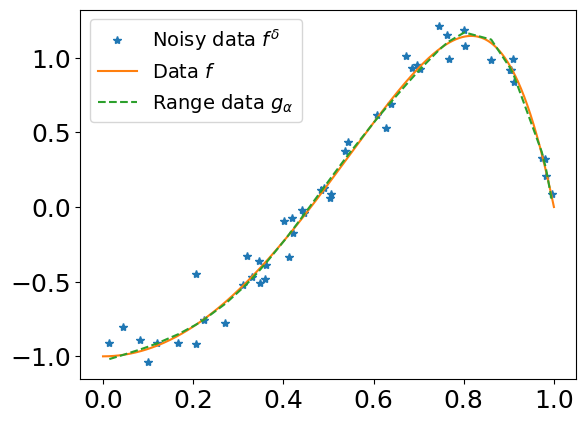}\label{subfig:poly-1d-res-RC-test-order20}}
   \caption{
   Comparison between noisy samples $f^\delta$, data $f$ and range condition element $g_\alpha$ for $\alpha = \delta / \| v \|$, computed for  examples \eqref{eq:results-fx-1d-polynomial-regression_deg5} (left) and  \eqref{eq:results-fx-1d-polynomial-regression-deg20}  (right).
   }
   \label{fig:poly-1d-res-RC-test--bottom}
\end{figure}

\begin{figure}[t]
\subfloat[$v^K$]{\includegraphics[width=0.45\textwidth]{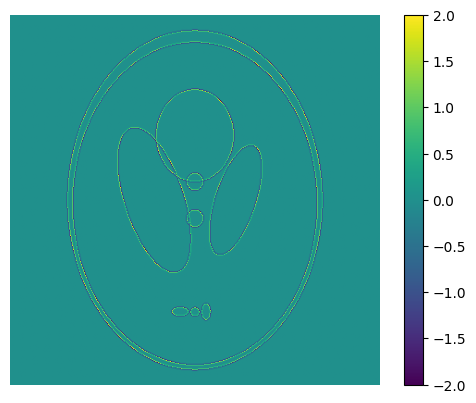}\label{subfig:slp_full_sc}}
\hspace{0.05\textwidth}
\subfloat[$\sqrt{|q_1^K|^2 + |q_2^K|^2}$]{\includegraphics[width=0.45\textwidth]{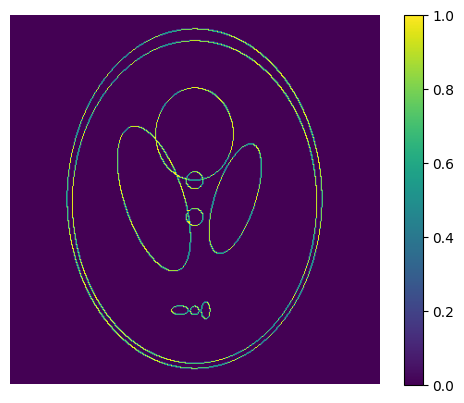}\label{subfig:slp_full_sg}}
\caption{Figure \ref{subfig:slp_full_sc} depicts the source condition element $v^K$ that satisfies $v^K \in \partial \text{TV}(u^\dagger)$ for the Shepp-Logan phantom $u^\dagger$. Figure \ref{subfig:slp_full_sg} shows the corresponding vector $q^K$ such that $A^\top q^K = v^K$. As one would expect  this for an element in the subgradient, the Euclidean norm of $q^K$ with respect to the vector components is bounded by one.}\label{fig:slp-sc-verification}
\end{figure}

\subsection{2D example: Fourier sub-sampling}\label{sec:fourier-subsampling} \label{subsec:fourier}
As a next example, we consider the inverse problem of sub-sampling the Fourier domain of two-dimensional images. If we restrict ourselves to Cartesian grids, we can describe this inverse problem mathematically via the operator equation
\begin{align*}
    S\mathcal{F}u^\dagger = f ,
\end{align*}
where $u^\dagger \in \mathbb{R}^{n_y \times n_x}$ denotes the unknown, two-dimensional discrete image, $f \in \mathbb{C}^m$ the sub-sampled Fourier data, $\mathcal{F}\colon\mathbb{R}^{n_y \times n_x} \rightarrow \mathbb{C}^{n_y \times n_x}$ the two-dimensional discrete Fourier transform, i.e., 
\begin{align*}
    (\mathcal{F}u)_{pq} = \frac{1}{\sqrt{n_x n_y}} \sum_{l = 0}^{n_y - 1} \sum_{j = 0}^{n_x - 1} u_{lj} \, e^{- i \frac{2\pi p l}{n_y}} e^{- i \frac{2\pi q j}{n_x}} ,
\end{align*}
for $p \in \{0, \ldots, n_y - 1\}$ and $q \in \{0, \ldots, n_x - 1\}$, and $S \colon \mathbb{C}^{n_y \times n_x} \rightarrow \mathbb{C}^m$ the sampling operator that selects samples from the Cartesian grid. Since the operator $\mathcal{F}$ is orthogonal, it holds that $\mathcal{F}^\top=\mathcal{F}^{-1}$. For our numerical experiments we choose $J$ to be the (discretised) isotropic total variation, i.e., $J = \text{TV} \colon\mathbb{R}^{n_y \times n_x} \rightarrow \mathbb{R}$ with
\begin{align*}
    \text{TV}(u) = \sum_{i = 1}^{n_y - 1} \sum_{j = 1}^{n_x - 1} \sqrt{ \left| u_{(i + 1) j} - u_{i j} \right|^2 + \left| u_{i (j + 1)} - u_{i j} \right|^2 } .
\end{align*}
In order to compute source condition elements $\mathcal{F}^{-1} S^\top v \in \partial \text{TV}(u^\dagger)$, we make use of the range condition reformulation described in Section \ref{sec:rc-condition} and minimise \eqref{eq:rc-minimisation} via explicit coordinate descent as described in \eqref{eq:coord_desc}, which for our problem reads
\begin{align}
\begin{split}
    v^{k + 1} &= v^k - \tau S \mathcal{F} \left( \mathcal{F}^{-1} S^\top v^k - A^\top q^k \right) ,  \\
    q^{k + 1} &= q^k - \sigma \left( A \left( A^\top q^k - \mathcal{F}^{-1} S^\top v^{k + 1} \right) + \text{prox}_{\| \cdot \|_{2, 1}}\left( Au^\dagger + q^k \right) - Au^\dagger \right) .
\end{split}\label{eq:coordinate-descent}    
\end{align}
\begin{figure}[h]
\subfloat[Fourier transformed data]{\includegraphics[width=0.45\textwidth]{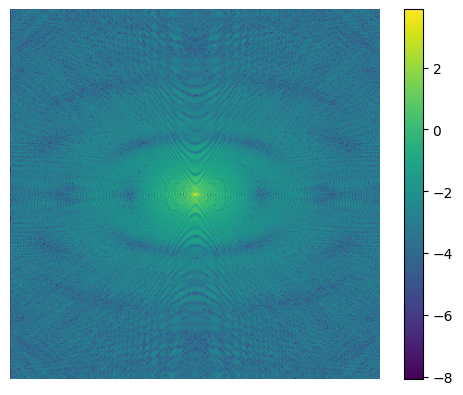}}
\hspace{0.05\textwidth}
\subfloat[Sub-sampled Fourier transformed data]{\includegraphics[width=0.45\textwidth]{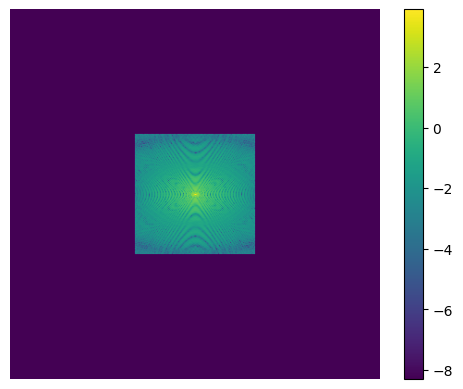}}
\caption{The Fourier transform of the Shepp-Logan phantom depicted in Figure \ref{subfig:slp-sanity1}, and the sub-sampled Fourier transform that emulates a simple low-pass filter. Note that we depict the logarithm of the absolute value of the Fourier transformed data plus the constant $1/4000$ for better visualisation.}\label{fig:slp-fourier-images}
\end{figure}%
Here $A\colon\mathbb{R}^{n_y \times n_x} \rightarrow \mathbb{R}^{(n_y - 1) \times (n_x - 1) \times 2}$ is the (forward) finite-difference discretisation of the gradient, i.e.,
\begin{align*}
    (A u)_{ijp} = \begin{cases}
     u_{(i + 1)j} - u_{ij} & \text{for } p = 1 , \\
     u_{i(j + 1)} - u_{ij} & \text{for } p = 2 ,
    \end{cases}  ,
\end{align*}
for $i \in \{1, \ldots , n_y - 1\}$ and $j \in \{1, \ldots, n_x - 1\}$, and $\text{prox}_{\| \cdot \|_{2, 1}}$ denotes the proximal map with respect to the function $\| \cdot \|_{2, 1} \colon \mathbb{R}^{(n_y - 1) \times (n_x - 1) \times 2} \rightarrow \mathbb{R}$ defined as
\begin{align*}
    \| q \|_{2, 1} = \sum_{i = 1}^{n_y - 1} \sum_{j = 1}^{n_x - 1} \sqrt{ \left| q_{ij1} \right|^2 + \left| q_{ij2} \right|^2 } .
\end{align*}
The proximal map for this function reads
\begin{align*}
    \left( \text{prox}_{\| \cdot \|_{2, 1}}(z) \right)_{ijp} = \frac{z_{ijp}}{\sqrt{|z_{ij1}|^2 + |z_{ij2}|^2}}\max\left(\sqrt{|z_{ij1}|^2 + |z_{ij2}|^2} - 1, 0 \right) ,
\end{align*}
for $i \in \{1, \ldots, n_y - 1\}$, $j \in \{1, \ldots, n_x - 1\}$ and $p \in \{1, 2\}$. We choose the positive step-size parameters $\tau = 1$ and $\sigma = 1/8$, in order to guarantee that \eqref{eq:coordinate-descent} converges to a global minimiser of \eqref{eq:rc-minimisation} for any initialisation, assuming that one exists. Note that throughout this section, we will always initialise the variables $v^0$ and $q^0$ with zeros of the correct dimensions.

In the following, we show the results for  two choices of $u^\dagger$:  the famous Shepp-Logan phantom depicted in Figure~\ref{subfig:slp-sanity1},  and (a grayscale version of) the image of astronaut Eileen Collins depicted in Figure~\ref{subfig:ec-sanity1}. Intuitively, we expect the Shepp-Logan phantom to satisfy the source condition with reasonably low norm of the corresponding source condition element, since  it is piecewise constant and therefore well suited for total variation regularisation. The astronaut image of Eileen Collins, on the other hand, is not piecewise constant but textured and contains both jumps and gradual increases in intensity and therefore ill suited for total variation regularisation, so that we expect that a source condition is either not satisfied, or that, if it is satisfied in this discrete setting, it is only satisfied with a large norm for the corresponding source condition element.

\subsubsection{Shepp-Logan phantom}\label{sec:fourier-subsampling-shepp-logan}
We begin our discussion of numerical results in this section with the Shepp-Logan phantom. We a use a gray-scale version with $n_y = n_x = 400$ pixels. Before we begin solving the inverse problem of recovering the phantom from low-pass filtered Fourier data, we verify empirically that the Shepp-Logan phantom $u^\dagger$ satisfies the source condition $v \in \partial \text{TV}(u^\dagger)$.. In order to compute $v$, we evaluate \eqref{eq:coordinate-descent} with $S$ being the identity operator, without any Fourier operator and with the parameter choices $\tau = 1$ and $\sigma = 1/9$ since $\| A \| < 8$ (cf. \cite{chambolle2004algorithm}), and stop the iteration once the Euclidean norm of the partial derivatives satisfies $\frac12 ( \| A(A^\top q^K - v^K) + \text{prox}_{\| \cdot \|_{2, 1}}(Au^\dagger + q^K) - Au^\dagger \| + \| v^K - A^\top q^K \| ) \leq 3.84 \times 10^{-14}$, which is chosen because it is very close to machine accuracy and therefore effectively zero. The corresponding iterates $v^K$ and $q^K$ are visualised in Figure \ref{fig:slp-sc-verification}. The Euclidean norm of $v^K$ is approximately $\| v^K \| \approx 101.78$, which means that we can accurately quantify error estimates of the form \eqref{eq:error-estimate-linear} for the Shepp-Logan phantom for the inverse problem of denoising.

We now want to move on to the inverse problem of estimating the Shepp-Logan phantom from low-pass filtered Fourier data. We choose a square low-pass filter of size $130 \times 130$ around the centre frequency. The Fourier transform of the Shepp-Logan phantom and the corresponding low-pass filter are visualised in Figure \ref{fig:slp-fourier-images}.

We now employ the same algorithmic strategy, namely \eqref{eq:coordinate-descent}, but where $S$ corresponds to the low-pass sub-sampling. In contrast to the previous example, the decrease of the norm of the partial derivatives is much slower, and we stop the iteration after 1000 iterations, with an approximate value of $0.12$. This indicates that the problem is computationally much harder to solve or that a source condition element does not exist and can therefore not be computed. However, we can certainly use the output $v^K$ and $q^K$ for $K = 1000$ as an approximate source condition element, which we visualise in Figure \ref{fig:slp-sc}. 

\begin{figure}[t]
\centering
\subfloat[$v^K$]{\includegraphics[height=3.84cm]{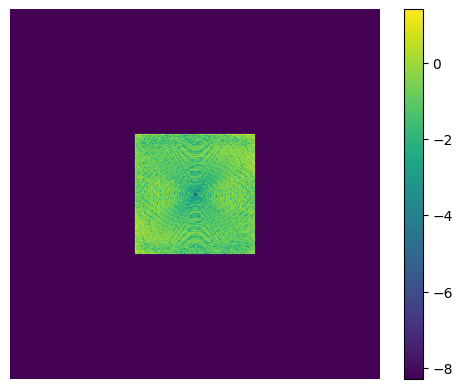}}
\hspace{0.04\textwidth}
\subfloat[Backprojection $\mathcal{F}^{-1} S^\top v^K$]{\includegraphics[height=3.84cm]{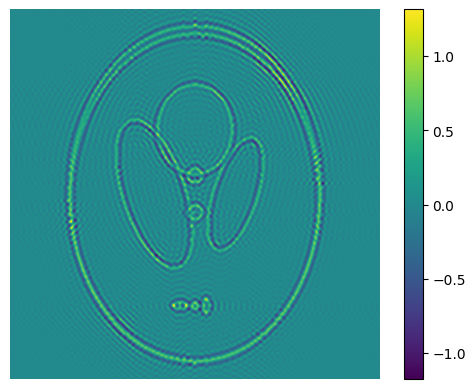}\label{subfig:slp-backproj}}
\hspace{0.04\textwidth}
\subfloat[$\sqrt{|q_1^K|^2 + |q_2^K|^2}$]{\includegraphics[height=3.84cm]{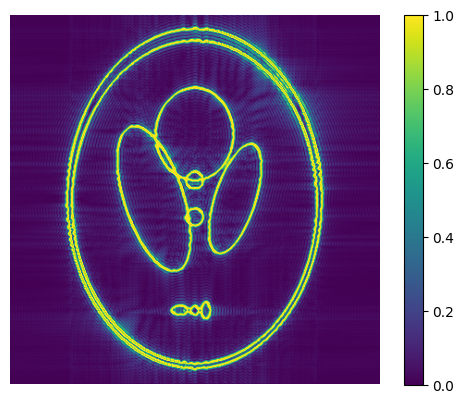}\label{subfig:slp-sg}}
\caption{The approximate source condition element $v^K$ computed with \eqref{eq:coordinate-descent} after $K = 1000$ iterations (left), the application of $\mathcal{F}^{-1}S^\top$ to $v^K$ (middle) and the Euclidean vector norm of the vector-field $q^K$ of the corresponding subgradient $A^\top q^K$ (right).}\label{fig:slp-sc}
\end{figure}

We immediately observe that the scaling of $\mathcal{F}^{-1} S^\top v^K$ is fairly different from $v^K$ in the denoising case, with values mostly in the range of $[-1, 1]$ instead of $[-2, 2]$. Interestingly, the norm of $v^K$ after $K= 1000$ iterations for this low-pass filter example is $\| v^K \| \approx 72.79$, which is significantly smaller than the $101.78$ that we encountered in the denoising case. A reason for this could be that we did not  solve the optimisation problem to the necessary accuracy . Another reason could be that the small $130 \times 130$ window of the low-pass filter, which allows $v^k$ to only have 16900 instead of 160000 non-zero values, has an impact on the norm of the (approximate) source condition element. The latter also explains the difference in scale of the projection $\mathcal{F}^{-1}S^\top v^K$ shown in Figure \ref{subfig:slp-backproj}. We also want to emphasise that the Euclidean norm of the vector-field $q^K$ is not strictly bounded by one (even though the visualisation in Figure \ref{subfig:slp-sg} seems to suggest this), but that some values exceed this threshold. This is  proof that for this example a source condition element is approximated, but not found.

\begin{figure}[t]
\subfloat[range condition data $g_\alpha^K$]{\includegraphics[width=0.425\textwidth]{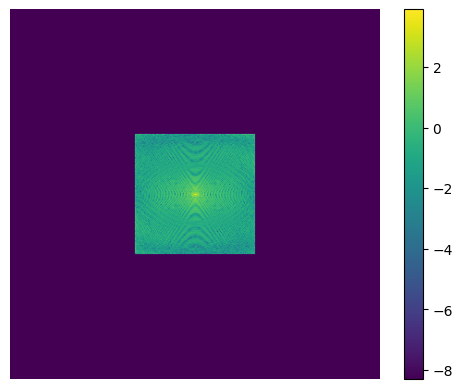}\label{subfig:slp-rc}}
\hspace{0.025\textwidth}
\subfloat[ground truth $u^\dagger$]{\includegraphics[width=0.45\textwidth]{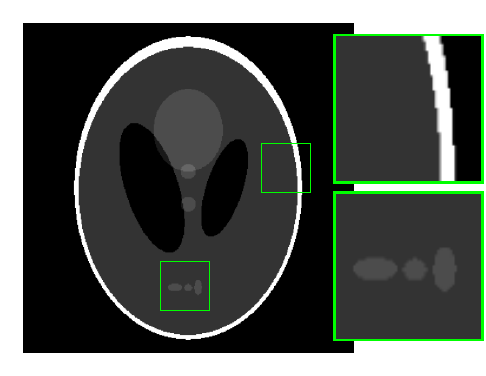}\label{subfig:slp-sanity1}}\\
\subfloat[approximate solution $u^N$]{\includegraphics[width=0.45\textwidth]{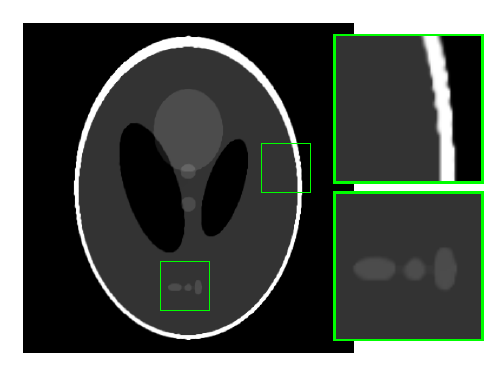} \label{subfig:slp-sanity2}}
\subfloat[low-pass filtered reconstruction $\mathcal{F}^{-1}S^\top S\mathcal{F}u^\dagger$]{\includegraphics[width=0.45\textwidth]{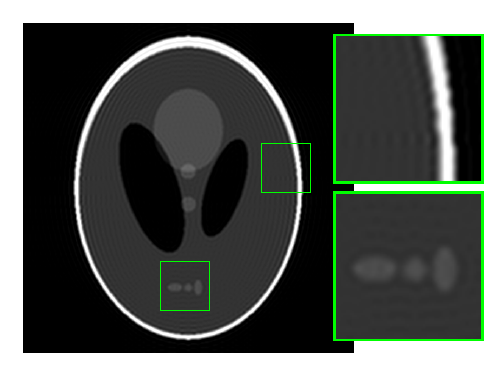} \label{subfig:slp-sanity3}}
\caption{This figure shows the (approximate) range condition data $g_\alpha^K = \alpha v^K + S\mathcal{F}u^\dagger$ for $\alpha = 1/2$ in Figure \ref{subfig:slp-rc}. In Figure \ref{subfig:slp-sanity1} we see the original Shepp-Logan phantom $u^\dagger$. In Figure \ref{subfig:slp-sanity2} the approximate solution $u^N$ of \eqref{eq:variational-regularisation} computed with the PDHG method for $\alpha = 1/2$ and input data $g_\alpha^K$. In Figure \ref{subfig:slp-sanity3} the linear low-pass filtered reconstruction $\mathcal{F}^{-1}S^\top S\mathcal{F}u^\dagger$. For the last three figures, a close-up of two region of interest is provided (green squares).}\label{fig:slp-rc}
\end{figure}

Another proof that $v^K$ is only an approximate source condition element can be found with the help of the relation between source and range condition \eqref{eq:range-condition}. With the range condition we can immediately characterise data $g_\alpha = \alpha v + Ku^\dagger$ given a source condition element $v$, or $g_\alpha^K = \alpha v^K + S\mathcal{F}u^\dagger$ for our example. We set $\alpha$ to the arbitrary value of $\alpha = 1/2$ and compute a solution of \eqref{eq:variational-regularisation} from the data $g_\alpha^K$ with an implementation of the primal-dual hybrid gradient (PDHG) method \cite{chambolle2016introduction}. We choose the PDHG version described in \cite{benning2021bregman} with step-sizes $\tau = 1/8$ and $\sigma = 1$, and iterate for $N = 1000$ iterations  after which the iterates of the primal ($u^N$) and dual variable ($q^N$) of the PDHG method satisfy $1/2 * (\| u^N - u^{N - 1} \| / \|u^N\| + \| q^N - q^{N - 1} \| / \| q^K \| ) < 6.85 \times 10^{-5}$. The number of iterations have been capped at 1000 because improvements in reconstruction beyond 1000 iterations is marginal. The results together with the data $g_\alpha$ are visualised in Figure \ref{fig:slp-rc}. Please note that while it is straight-forward to verify that an element computed with either \eqref{eq:graddesc} or \eqref{eq:coord_desc} satisfies the source condition (as we have done in Section \ref{subsec:1Dpoly} and also for the fully-sampled example in this section), it is not equally straight-forward to disprove existence of a source condition element when the result of either algorithm does not satisfy the source condition. This could also be a result of very slow convergence, and faster optimisation algorithms with guaranteed convergence after a finite number of iterations may be required.

We see from the close-up in Figure \ref{fig:slp-rc} that the reconstruction does not exactly match the Shepp-Logan phantom, which is what one would expect if $v^K$ was a source condition element.  However, we nevertheless observe that the reconstruction $u^N$ for data $g_\alpha^K$ is a good approximation of the Shepp-Logan phantom and reasonably better than the traditional low-pass filter as one would expect. Another interesting observation is that if we were to define $u^\dagger = u^N$, we can guarantee that $u^\dagger$ satisfies the source condition with source condition element $v$ that satisfies $\| v \| \approx 72.79$. This is lower than the value of $101.78$ that we obtained in the denoising case for the Shepp-Logan phantom, and likely a result of this new $u^\dagger$ being smoother than the Shepp-Logan phantom. However, the nature of the inverse problem with the low-pass filter forward operator might also play a role for the lower value of the norm as it is plausible that for this type of filter errors are amplified less strongly, since the transpose operation $\mathcal{F}^{-1} S^\top$ filters high-frequency errors quite effectively.

We are going to see in Section \ref{sec:optimal-sampling} that we can find a more data-adaptive sampling strategy (compared to the low-pass filter) for which we can recover $u_\alpha \in \mathcal{R}_\alpha(g_\alpha)$ almost perfectly from a smaller number of samples.

\begin{figure}[t]
\subfloat[Fourier transformed data]{\includegraphics[width=0.45\textwidth]{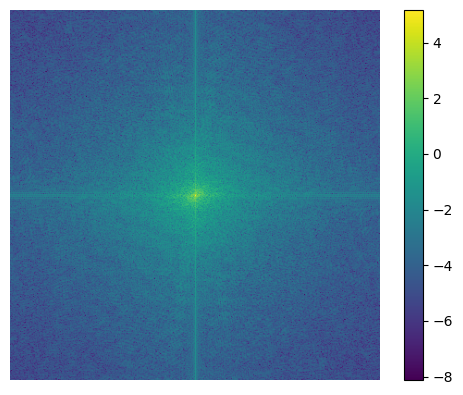}\label{subfig:ec_ft}}
\hspace{0.05\textwidth}
\subfloat[Sub-sampled Fourier transformed data]{\includegraphics[width=0.45\textwidth]{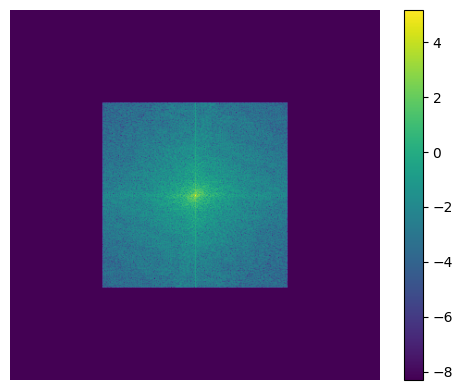}\label{subfig:ec_sub_ft}}
\caption{The Fourier transform of the astronaut image depicted in Figure \ref{subfig:ec-sanity1}, and the sub-sampled Fourier transform that emulates a simple low-pass filter. Note that we depict the logarithm of the absolute value of the Fourier transformed data plus the constant $1/4000$ for better visualisation.}
\end{figure}

\subsubsection{Eileen Collins}\label{sec:fourier-subsampling-astronaut}
We perform identical experiments as in the previous section, but this time we choose an element $u^\dagger$ for which satisfying the source condition is highly unlikely: an image with textures and fine-scale details. We pick the astronaut image of Eileen Collins depicted in Figure \ref{subfig:ec-sanity1}. We use a gray-scale version that is down-scaled to $n_y = 400$ and $n_x = 400$ pixels as our image $u^\dagger$, and we try to empirically verify if the astronaut image satisfies a source condition of the form $\mathcal{F}^{-1} S^\top v \in \partial \text{TV}(u^\dagger)$. 

Similar to the previous example, we choose a square low-pass filter, but this time of size $200 \times 200$ around the centre frequency. The Fourier transform of the Eileen Collins image and the corresponding low-pass filter are visualised in Figures \ref{subfig:ec_ft} and  \ref{subfig:ec_sub_ft}, respectively.

In analogy to the previous section, we evaluate \eqref{eq:coordinate-descent} with the parameter choices $\tau = 1$ and $\sigma = 1/8$, initialise with zero arrays and also stop the iteration after $K = 1000$ iterations when the Euclidean norm of the partial derivatives satisfies $\frac12 ( \| A(A^\top q^K - \mathcal{F}^{-1}S^\top v^K + \text{prox}_{\| \cdot \|_{2, 1}}(Au^\dagger + q^K) - Au^\dagger \| + \| \mathcal{F}^{-1} S^\top v^K - A^\top q^K \|) \leq 0.59$. The corresponding iterates $v^K$ and $q^K$ are visualised in Figures \ref{subfig:ec-sc} and  \ref{subfig:ec-sg}, respectively. We want to emphasise that the Euclidean norm of the vector-field $q^K$ is not strictly bounded by one (even though the visualisation in Figure \ref{subfig:ec-sg} seems to suggest this), but that some values exceed this threshold, which is proof that for this example a source condition element is only approximated, but not found. 

In addition, the Euclidean norm of $v^K$ is approximately $\| v^K \| \approx 255.15$ and much larger compared to the Shepp-Logan example, which is what we would expect from a textured image $u^\dagger$ with fine details.

\begin{figure}[t]
\centering
\subfloat[$v^K$]{\includegraphics[height=3.84cm]{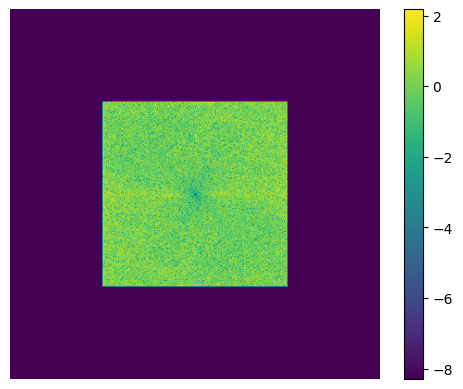}\label{subfig:ec-sc}}
\hspace{0.04\textwidth}
\subfloat[Backprojection $\mathcal{F}^{-1} S^\top v^K$]{\includegraphics[height=3.84cm]{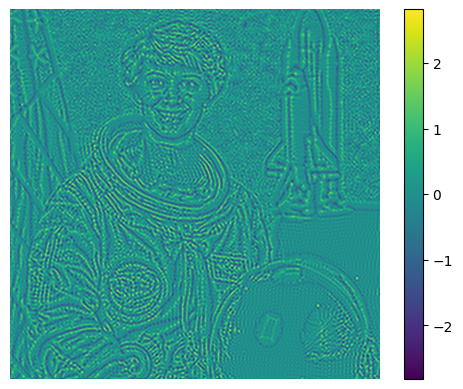}\label{subfig:ec-backproj}}
\hspace{0.04\textwidth}
\subfloat[$\sqrt{|q_1^K|^2 + |q_2^K|^2}$]{\includegraphics[height=3.84cm]{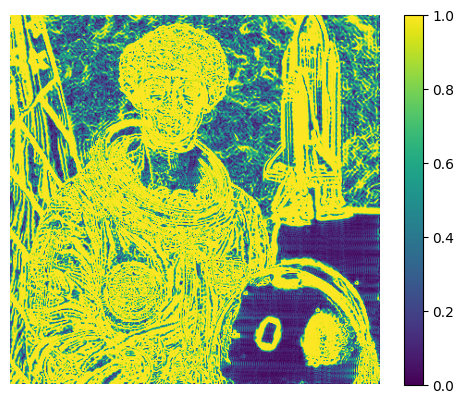}\label{subfig:ec-sg}}
\caption{The approximate source condition element $v^K$ computed with \eqref{eq:coordinate-descent} after $K = 1000$ iterations (left), the application of $\mathcal{F}^{-1}S^\top$ to $v^K$ (middle) and the Euclidean vector norm of the vector-field $q^K$ of the corresponding subgradient $A^\top q^K$ (right).}\label{fig:ec-sc}
\end{figure}

Similarly to the Shepp-Logan phantom, we check  solutions of \eqref{eq:variational-regularisation} for the range data $g_\alpha = \alpha v + S\mathcal{F} u^\dagger$ for $\alpha = 1/2$. For our example, we define $g_\alpha^K = \alpha v^K + S\mathcal{F} u^\dagger$ and compute a solution of \eqref{eq:variational-regularisation} with the PDHG method as described in the previous section (for identical initialisation and parameter choices). After $N = 1000$ iterations we compute primal ($u^N$) and dual ($q^N$) iterates that satisfy $(\| u^N - u^{N - 1} \|/\|u^N\| + \| q^N - q^{N - 1}\|/\|q^N\|)/2 < 10^{-4}$. The result $u^N$ together with the data $g_\alpha^N$ are visualised in Figure \ref{fig:ec-rc}.

\begin{figure}[H]
\centering
\subfloat[range condition data $g_\alpha^K$]{\includegraphics[width=0.425\textwidth]{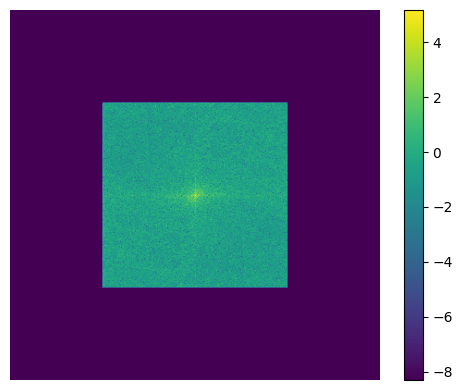}\label{subfig:ec-rc}}
\hspace{0.05\textwidth}
\subfloat[ground truth $u^\dagger$]{\includegraphics[width=0.485\textwidth]{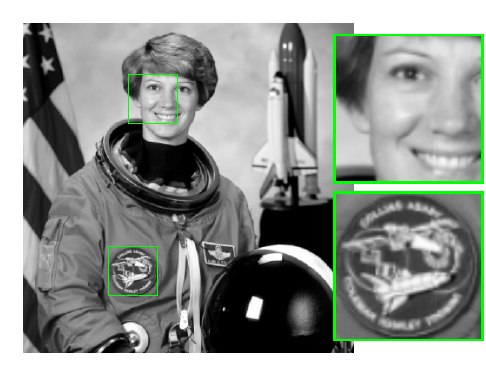}\label{subfig:ec-sanity1}}\\
\subfloat[approximate solution $u^N$]{\includegraphics[width=0.485\textwidth]{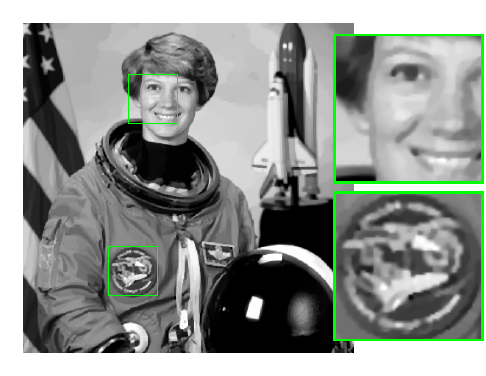}\label{subfig:ec-sanity2}}
\subfloat[low-pass filtered reconstruction $\mathcal{F}^{-1}S^\top S\mathcal{F}u^\dagger$]{\includegraphics[width=0.485\textwidth]{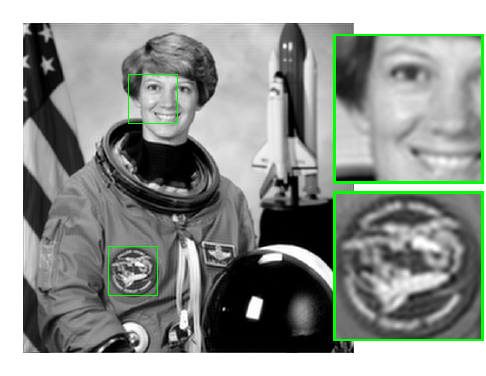}\label{subfig:ec-sanity3}}

\caption{This figure shows the (approximate) range condition data $g_\alpha^K = \alpha v^K + S\mathcal{F}u^\dagger$ for $\alpha = 1/2$ in Figure \ref{subfig:ec-rc}. In Figure \ref{subfig:ec-sanity1} we see the image $u^\dagger$ of Eileen Collins. In Figure \ref{subfig:ec-sanity2} the approximate solution $u^N$ of \eqref{eq:variational-regularisation} computed with the PDHG method for $\alpha = 1/2$ and input data $g_\alpha^K$. In Figure \ref{subfig:ec-sanity3} the linear low-pass filtered reconstruction $\mathcal{F}^{-1}S^\top S\mathcal{F}u^\dagger$. For all three figures, a close-up of two region of interest is provided (green squares).}\label{fig:ec-rc}
\end{figure}

We clearly see that the reconstruction $u^N$ is not identical to $u^\dagger$ but rather a cartoon-like approximation that misses high-scale features such as textures. This does not come as a surprise as we would expect this from a total-variation approximation that does not have access to the high-scale features that the low-pass filter suppresses. We are going to see in Section \ref{sec:optimal-sampling} that we can improve the reconstruction by identifying a more adaptive sampling pattern when estimating the approximate source condition element.

\subsection{Optimal sampling in the Fourier domain}\label{sec:optimal-sampling}
In Section \ref{sec:fourier-subsampling} we studied the empirical computation of source condition and approximate source condition elements for the variational regularisation of the form
\begin{align*}
    u_\alpha \in \argmin_{u \in \mathbb{R}^{n_y \times n_x}} \left\{ \frac12 \left\| S \mathcal{F} u - f^\delta \right\|^2 + \alpha \text{TV}(u) \right\} .
\end{align*}
In this section we want to take a step further and also estimate the sampling pattern that defines the sampling operator $S$. This idea is not new and has applications, for example, in  MRI (cf. \cite{sherry2020learning}). However, in this section we present a much simpler approach for estimating $S$ compared to works such as \cite{sherry2020learning}. Most importantly, the approach can be phrased as a convex optimisation problem depending on how we choose to enforce sparsity of $\tilde{v}$, while most alternative approaches constitute non-convex optimisation problems.

The approach is summarised as follows. We assume that $S$ no longer maps onto $\mathbb{C}^m$, but that $S$ maps onto $\mathbb{C}^{n_y \times n_x}$ and that it is a diagonal operator with zero-entries on the diagonal wherever the sampling mask is zero. If we consider the standard source condition $\mathcal{F}^{-1} S^\top v \in \partial \text{TV}(u^\dagger)$ for the above problem, we can define $\tilde{v} \colon = S^\top v \in \mathbb{C}^{n_y \times n_x}$, and estimate $\tilde{v}$ instead of $v$. Since $\tilde{v}$ has to be sparse by nature in order to emulate sub-sampling, we can estimate $\tilde{v}$ by solving
\begin{align}
    \min_{\tilde{v}, q^\dagger} \left\{ \frac12 \| \mathcal{F}^{-1} \tilde{v} - A^\top q^\dagger \|^2 + G_{\| \cdot \|_{2, 1}}(q^\dagger) + \beta J(\tilde{v}) \right\} ,\label{eq:sparse-sc}
\end{align}
where $J$ is a sparsity-inducing regularisation function, e.g. $J(\tilde{v}) = \| \tilde{v} \|_1 = \sum_{i = 1}^{n_y} \sum_{j = 1}^{n_x} |\tilde{v}_{ij}| $ where $| \cdot |$ denotes the complex modulus, and $\beta > 0$ is a penalisation parameter that controls the level of sparsity of $\tilde{v}$. We then minimise \eqref{eq:sparse-sc} via the proximal alternating linearised minimisation (PALM) algorithm \cite{doi:10.1137/120887795,bolte2014proximal}, i.e., we approximate a solution of \eqref{eq:sparse-sc} by iterating
\begin{align}
\begin{split}
    \tilde{v}^{k + 1} &= \text{prox}_{\beta J}\left( \tilde{v}^k - \tau \left( \tilde{v}^k - \mathcal{F}A^\top q^k \right) \right) , \\
    q^{k + 1} &= q^k - \sigma \left( A\left( A^\top q^k - \mathcal{F}^{-1} \tilde{v}^{k + 1} \right) + \text{prox}_{\| \cdot \|_{2, 1}}\left( q^k + Au^\dagger\right) - Au^\dagger \right)  , 
\end{split}\label{eq:palm}
\end{align}
for $k \in \mathbb{N}$, initial values $\tilde{v}^0$ and $q^0$ and suitable step-size parameters $\tau$ and $\sigma$.

In the following, we approximate $\tilde{v}^K$ and $q^K$ for the arbitrary choice $K = 1000$. We then estimate the mask that determines the zero and non-zero entries on the diagonal of $S$ by simply identifying the zero and non-zero entries of $\tilde{v}$. Please note that we manually enforce that the lowest frequency is included in the mask, as it otherwise would be excluded since total variation subgradients have zero mean. Subsequently, we estimate $v$ by solving \eqref{eq:coordinate-descent} with the estimated sub-sampling operator $S$ from the previous step, for $K = 1000$ iterations. We have conducted experiments for the same examples visualised in Figures \ref{subfig:slp-sanity1} and~\ref{subfig:ec-sanity1}, which are described in the next two sections, and we compare both sampling pattern and reconstructions to those of \cite{sherry2020learning}. For the latter, we use the code provided \href{https://github.com/fsherry/bilevelmri}{here}. Please note that due to the high computational cost and runtime of the method in \cite{sherry2020learning}, we cap the outer iterations of the bilevel optimisation algorithm at 250, and only operate on down-scaled $256 \times 256$ version of the images.

\subsubsection{Shepp-Logan phantom}
We begin with the Shepp-Logan phantom as seen in Figure \ref{subfig:slp-sanity1} and compute the corresponding element $\tilde{v}^K$ via \eqref{eq:palm} with zero-initialisation, $J = \beta \| \cdot \|_1$ and the parameter $\beta = 0.1$. The corresponding $\tilde{v}^K$, the mask of all non-zero coefficients and the mask of the 16900 largest Fourier coefficients (in magnitude) are visualised in Figure \ref{fig:slp-sparse-sc}. Please note that for $\beta = 0.1$ the number of non-zero coefficients of $\tilde{v}^K$ after $K = 1000$ iterations is 14553, which is comparable but even slightly lower than the 16900 non-zero coefficients that were used in the low-pass filter example in Section \ref{sec:fourier-subsampling-shepp-logan}. The number of non-zero coefficients of the approach in \cite{sherry2020learning} after 250 iterations with the same sparsity parameter $\beta$ is 18642, which is much larger and likely a result of the early termination as a consequence of the high computational cost.

\begin{figure}[H]
\centering
\subfloat[$\tilde{v}^K$]{\includegraphics[width=0.3\textheight]{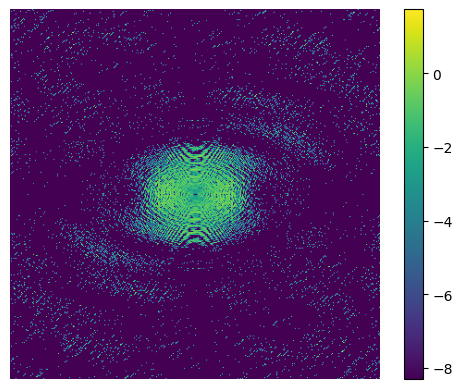}\label{subfig:slp-sparse-sc}}
\hspace{0.03\textwidth}
\subfloat[Sampling pattern]{\includegraphics[width=0.255\textheight]{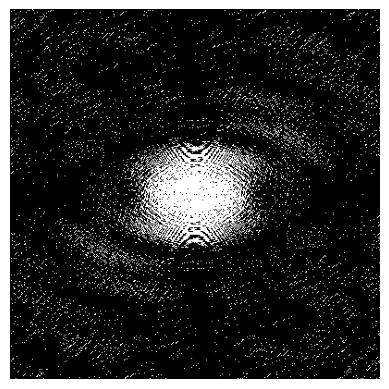}\label{subfig:slp-sparse-mask}} \\
\hspace{0.01\textwidth}
\subfloat[Learned sampling pattern with~\cite{sherry2020learning}]{\includegraphics[width=0.245\textheight]{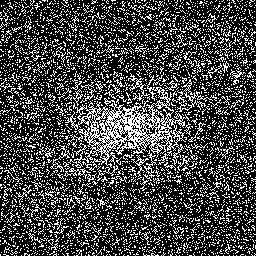} \label{subfig:slp-optimal-mask-comparison}}
\hspace{0.1\textwidth}
\subfloat[Largest Fourier coefficients]{\includegraphics[width=0.255\textheight]{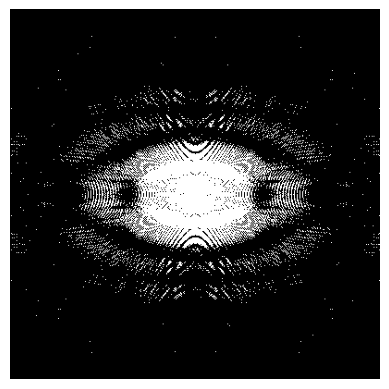} \label{subfig:slp-optimal-mask}}
\caption{Figure \ref{subfig:slp-sparse-sc} shows the (approximate) source condition element for the Shepp-Logan phantom computed with \eqref{eq:palm}. Figure \ref{subfig:slp-sparse-mask} shows the corresponding mask with a 9\% ratio of non-zero to zero coefficients. For comparison, Figure~\ref{subfig:slp-optimal-mask-comparison} shows the mask learned with the approach proposed in~\cite{sherry2020learning} with a 28.45\% ratio and Figure \ref{subfig:slp-optimal-mask} shows the mask of the Fourier coefficients with largest magnitude  with a 10.5\% ratio of non-zero to zero coefficients.}\label{fig:slp-sparse-sc}
\end{figure}

We want to emphasise that the learned mask differs substantially from the mask that stems from the largest Fourier coefficients (in magnitude). In particular, low-frequency information is traded in for high-frequency information since the total variation regularisation is very effective at establishing information from limited low-frequency data but very ineffective at generating high-frequency information. What is a little bit surprising is that the learned sampling pattern with \cite{sherry2020learning} also differs from the learned mask with the proposed approach; it appears to be more random, albeit with a heavy weighting of centre frequencies.

Next, given the mask, we estimate $v$ via \eqref{eq:coordinate-descent} and obtain $v^K$ for $K = 1000$. Note that the corresponding norm is $\| v^K \| \approx 113.93$, which is slightly larger but fairly comparable to the norm that we established in the denoising setting. 
Subsequently, we perform another sanity-check and compute an approximation of \eqref{eq:variational-regularisation} for data $g_\alpha^K = \alpha v^K + S\mathcal{F}u^\dagger$ for $\alpha = 1/2$ via the PDHG method with the same initialisation and parameter configurations as described in Section \ref{sec:fourier-subsampling-shepp-logan} and visualise the results in Figure \ref{fig:slp-sparse-sc}.

\begin{figure}[H]
\centering
\subfloat[Ground truth $u^\dagger$]{
\includegraphics[width=0.33\textwidth]{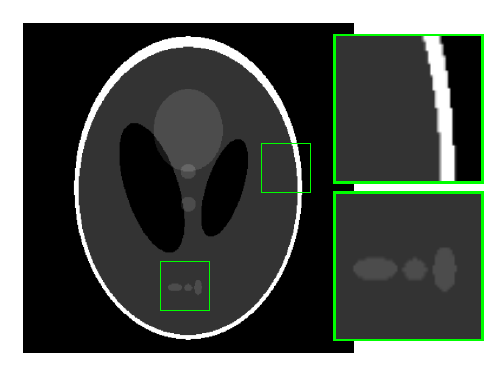}\label{subfig:slp-sparse-sanity1}}
\subfloat[Approximate solution $u^N$]{
\includegraphics[width=0.33\textwidth]{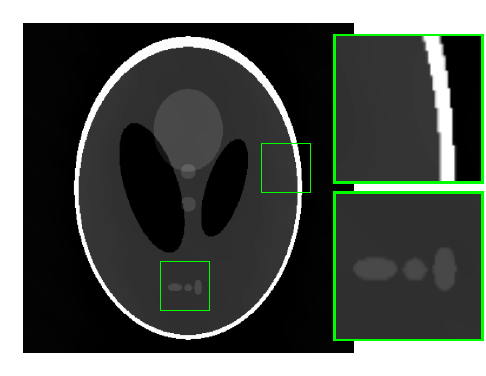}\label{subfig:slp-sparse-sanity2}}
\subfloat[Projection $\mathcal{F}^{-1} S^\top S \mathcal{F} u^\dagger$]{
\includegraphics[width=0.33\textwidth]{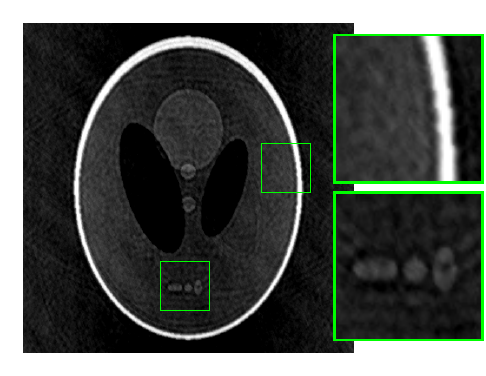}\label{subfig:slp-sparse-sanity3}} \\
\subfloat[Low-pass filtered reconstruction]{
\includegraphics[width=0.33\textwidth]{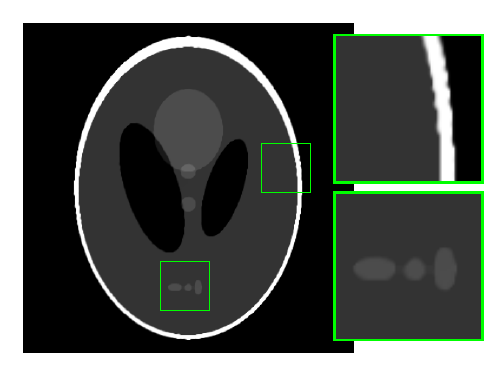}\label{subfig:slp-sparse-sanity4}}
\subfloat[Reconstruction with~\cite{sherry2020learning}]{
\includegraphics[width=0.33\textwidth]{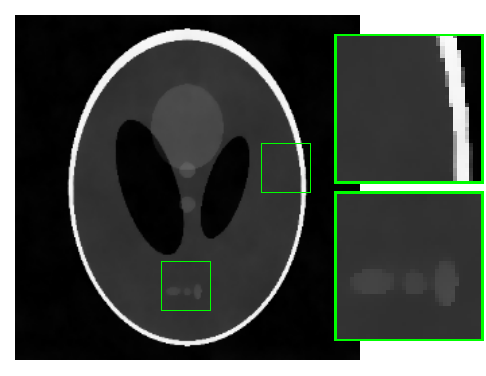}\label{subfig:slp-sparse-sanity5}}
\caption{In Figure~\ref{subfig:slp-sparse-sanity1} we show the
Shepp-Logan phantom ; in Figure~\ref{subfig:slp-sparse-sanity2} the approximation $u^N$ of \eqref{eq:variational-regularisation} with the learned mask and corresponding range data ; in Figure~\ref{subfig:slp-sparse-sanity3} the projection $\mathcal{F}^{-1} S^\top S \mathcal{F} u^\dagger$; in Figure~\ref{subfig:slp-sparse-sanity4} the approximation $u^N$ of \eqref{eq:variational-regularisation} with the low-pass filter from Section \ref{sec:fourier-subsampling-shepp-logan}; in Figure~\ref{subfig:slp-sparse-sanity5} the reconstruction based on~\cite{sherry2020learning}. For all figures, a close-up of two region of interest is provided (green squares).}
\label{fig:slp-sparse-sc}
\end{figure}

In comparison to Section \ref{sec:fourier-subsampling-shepp-logan}, we observe that we recover the Shepp-Logan phantom almost perfectly whilst using a sampling operator that uses fewer samples than the low-pass sampling operator. Please note that the reconstruction also outperforms the reconstruction with \cite{sherry2020learning}, despite the lower sampling ratio. This is likely a consequence of the Huber approximation of the total variation that has to be used in \cite{sherry2020learning}.

\subsubsection{Eileen Collins}
We now proceed to the image of Eileen Collins as depicted in Figure \ref{subfig:ast-sparse-sanity1} and perform the same set of tasks as described in the previous section, i.e., we compute the element $\tilde{v}^K$ via \eqref{eq:palm} with zero-initialisation and for $J = \beta \| \cdot \|_1$ for which we choose $\beta = 0.24$ this time. The corresponding $\tilde{v}^K$, the mask of all non-zero coefficients and the mask of the 40000 largest Fourier coefficients (in magnitude) are visualised in Figure \ref{fig:ast-sparse-sc}. Please note that for $\beta = 0.24$ the number of non-zero coefficients of $\tilde{v}^K$ after $K = 1000$ iterations is 38262, which is comparable but even slightly lower than the 40000 non-zero coefficients that were used in the low-pass filter example in Section \ref{sec:fourier-subsampling-astronaut}. The number of non-zero coefficients of the approach in \cite{sherry2020learning} after 250 iterations with the same sparsity parameter $\beta$ is 59197, which is much higher and, like for the Shepp-Logan phantom, likely a result of the early termination of the algorithm.

\begin{figure}[H]
\centering
\subfloat[$\tilde{v}^K$]{\includegraphics[width=0.3\textheight]{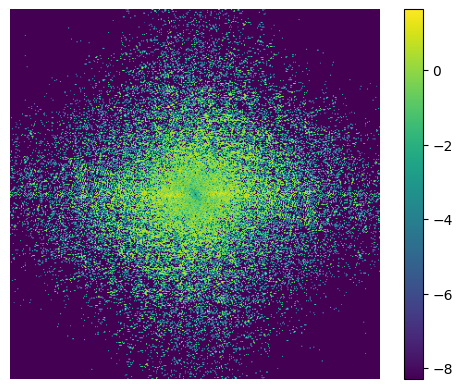}\label{subfig:ast-sparse-sc}}
\hspace{0.03\textwidth}
\subfloat[Sampling pattern]{\includegraphics[width=0.255\textheight]{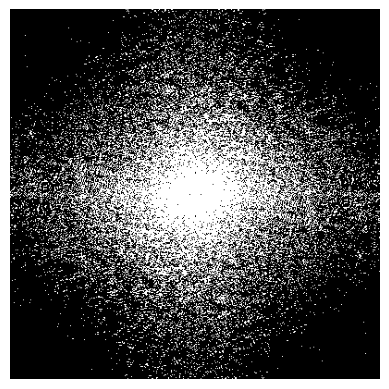}\label{subfig:ast-sparse-mask}}\\
\hspace{0.01\textwidth}
\subfloat[Learned sampling pattern with~\cite{sherry2020learning}]{\includegraphics[width=0.245\textheight]{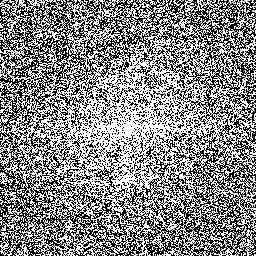} \label{subfig:ast-optimal-mask-comparison}}
\hspace{0.1\textwidth}
\subfloat[Largest Fourier coefficients]{\includegraphics[width=0.255\textheight]{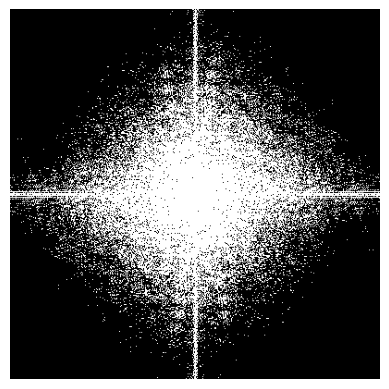} \label{subfig:ast-optimal-mask}}
\caption{Figure \ref{subfig:ast-sparse-sc} shows the (approximate) source condition element for the Eileen Collins image computed with \eqref{eq:palm}. Figure \ref{subfig:ast-sparse-mask} shows the corresponding mask with a ratio of 24\% non-zero vs zero coefficients. For comparison, Figure~\ref{subfig:ast-optimal-mask-comparison} shows the mask learned with the approach proposed in~\cite{sherry2020learning} with a ratio of 90.3\% and Figure \ref{subfig:ast-optimal-mask} shows the mask of the  Fourier coefficients with largest magnitude with a ratio of 25\%.}\label{fig:ast-sparse-sc}
\end{figure}

We see that the learned mask in Figure \ref{subfig:ast-sparse-mask} differs from that obtained from the Fourier coefficients with largest magnitude in Figure \ref{subfig:ast-optimal-mask}. In particular, the information that corresponds to coarse edges in the image $u^\dagger$ is less important for the total variation-based model, since it can interpolate this type of missing information rather well. Instead, more higher frequencies corresponding to texture information are being sampled as it is impossible for a total variation-based model to generate textures.

Given the mask, we estimate $v$ via \eqref{eq:coordinate-descent} and obtain $v^K$ for $K = 1000$. Note that the corresponding norm is $\| v^K \| \approx 302.47$, which is larger than the norm that we established in the low-pass filter setting. 
Subsequently, we perform another sanity-check and compute an approximation of \eqref{eq:variational-regularisation} for data $g_\alpha^K = \alpha v^K + S\mathcal{F}u^\dagger$ for $\alpha = 1/2$ via the PDHG method with the same initialisation and parameter configurations as described in Section \ref{sec:fourier-subsampling-astronaut} and visualise the results in Figure \ref{fig:ast-sparse-sc}.

\begin{figure}[t]
\centering
\subfloat[Ground truth $u^\dagger$]{
\includegraphics[width=0.33\textwidth]{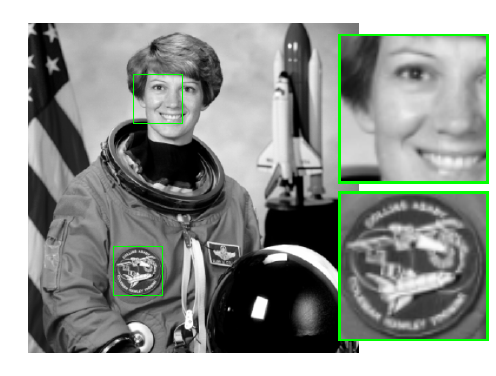}\label{subfig:ast-sparse-sanity1}}
\subfloat[Approximate solution $u^N$]{
\includegraphics[width=0.33\textwidth]{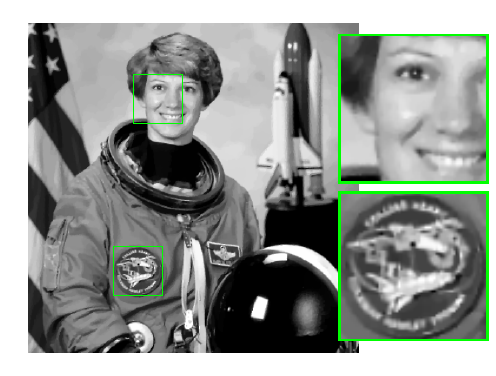}\label{subfig:ast-sparse-sanity2}} 
\subfloat[Projection $\mathcal{F}^{-1} S^\top S \mathcal{F} u^\dagger$]{
\includegraphics[width=0.33\textwidth]{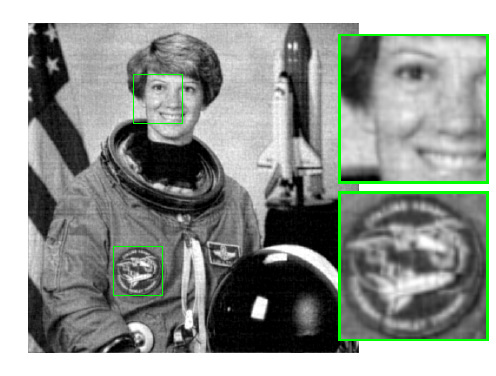}\label{subfig:ast-sparse-sanity3}} \\
\subfloat[Low-pass filtered reconstruction]{
\includegraphics[width=0.33\textwidth]{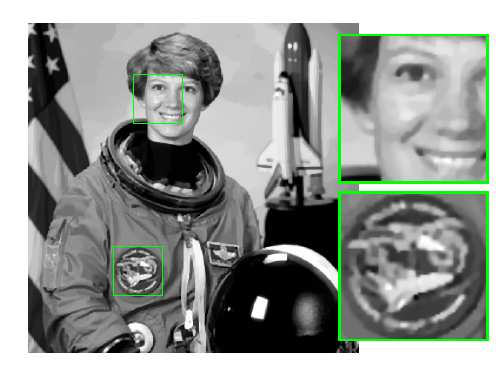}\label{subfig:ast-sparse-sanity4}}
\subfloat[Reconstruction with~\cite{sherry2020learning}]{
\includegraphics[width=0.33\textwidth]{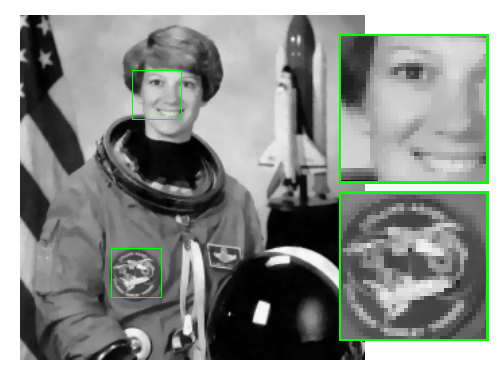}\label{subfig:ast-sparse-sanity5}}
\caption{In Figure~\ref{subfig:ast-sparse-sanity1} we show the image of Eileen Collins; in Figure~\ref{subfig:ast-sparse-sanity2} the approximation $u^N$ of \eqref{eq:variational-regularisation} with the learned mask and corresponding range data; in Figure~\ref{subfig:ast-sparse-sanity3} the projection $\mathcal{F}^{-1} S^\top S \mathcal{F} u^\dagger$; in Figure~\ref{subfig:ast-sparse-sanity4} the approximation $u^N$ of \eqref{eq:variational-regularisation} with the low-pass filter from Section \ref{sec:fourier-subsampling-astronaut}; in Figure~\ref{subfig:ast-sparse-sanity5} the reconstruction based on~\cite{sherry2020learning}. 
For all figures, a close-up of two region of interest is provided (green squares).}\label{fig:ast-sparse-sc}
\end{figure}

In comparison to Section \ref{sec:fourier-subsampling-astronaut}, we observe that we still have not found  a source condition element, which is not surprising. We do observe, however, that the recovery of \eqref{eq:variational-regularisation} with the range data for the learned sampling operator is much better compared to the recovery with the low-pass sampling operator. We observe several high-frequency features such as the specular highlights in the eyes that are not present in the low-pass filter reconstruction. Please note that the reconstruction with the learned sampling operator uses a smaller amount of samples than the reconstruction with the low-pass sampling operator.

\section{Conclusions \& outlook}\label{sec:conclusions}
We conclude this work with a brief summary of its main findings, and provide an outlook of research topics that we have not been able to address but that make for interesting future research.

\subsection{Conclusions}
In this paper, we have pursued the challenging task of estimating source condition elements as tools for the quantitative analysis of variational regularisation of linear inverse problems. Specifically, 
\begin{itemize}
    \item we considered a rather general Banach space setup, which encompasses a large class of regularisation methods of interest in applications;
    \item we reformulated the source conditions (and the closely related range conditions) as the solution of convex minimisation problems by means of tools from convex analysis;
    \item we provided iterative algorithms for the numerical approximation of source (and range) condition elements, which pave the way to quantitative error estimates;
    \item to demonstrate the performance of the proposed approach, we enclosed a significant set of numerical experiments for processing both 1D and 2D signals;
    \item we described how the introduced framework can provide insightful inspiration for novel approaches in optimal measurement design: in particular, the proposed procedure of optimal subsampling in the Fourier domain provided promising results.
\end{itemize}

\subsection{Outlook}
There are numerous research aspects that we have either addressed only briefly or have not addressed at all. One obvious aspect is that source and range conditions are not only relevant for convergence rates for variational regularisations but other regularisations such as iterative regularisations, too (cf. \cite{burger2007error,benning2018modern}). Hence, in order to quantify error estimates for such regularisations, identical strategies as proposed in this paper can be deployed. The same also holds true for regularisation methods that are based on more general data fidelity terms as described in \eqref{eq:VarRegu}, for which error estimates also rely on source or range conditions \cite{benning2011error}. 

Further, we did not look into stronger source conditions or variational source conditions as outlined in the introduction, but it should be straight forward to design convex minimisation problems similar to the ones presented in this work for the computation of, e.g., strong source condition elements. We also maintained focus solely on linear inverse problems, while source conditions play a pivotal role for convergence rates of nonlinear inverse problems, too. 

Another interesting direction for research is the computation of generalised eigenfunctions or singular vectors as addressed in the introduction. Suppose we are given a function $f$ and we would like to find a function $u_\lambda$ that satisfies $\lambda u_\lambda \in J(u_\lambda)$ such that it is close to $f$, then we can formulate the constrained minimisation problem
\begin{align*}
    \min_{u} \ B_J(u, (1 + \lambda) u) \qquad \text{subject to} \qquad \| u - f \| \leq \delta ,
\end{align*}
for hyperparameters $\lambda > 0$ and $\delta \geq 0$. If a solution $u_\lambda$ satisfies $B_J(u_\lambda, (1 + \lambda) u_\lambda) = 0$, we can conclude that it is a generalised eigenfunction with eigenvalue $\lambda$ and closest to $f$ within the ball of radius $\delta$.

Last but not least, we want to emphasise that extensions of the proposed minimisation problems can find applications in a wide range of data-driven inverse problems applications such as operator correction and learning data-driven regularisation functionals that we want to briefly describe in the following two sections. 

\subsubsection{Operator correction}\label{sec:operator-correction}
In analogy to the optimal sampling example presented in Section \ref{sec:optimal-sampling}, one can modify the proposed approaches to perform more general operator corrections beyond sampling. Taking the polynomial regression problem from Section \ref{subsec:1Dpoly} as an example, one can consider the following extension of the classical LASSO approach:
\begin{align*}
    u_\alpha \in \argmin_{u \in \ell^2 \cap \ell^1} \left\{ \frac12 \left\| A \left( Ku - f^\delta \right) \right\|^2 + \alpha \| u \|_1 \right\} ,
\end{align*}
where $\ell^2 \cap \ell^1$ denotes the intersection of the spaces $\ell^2$ and $\ell^1$. In this example, the goal is to estimate not only the source condition element $v$, but to also estimate a pre-conditioner matrix $A$. We could do so by formulating the non-convex optimisation problem
\begin{align*}
    \min_{A, \{ v_i \}_{i = 1}^s} \left[ \sum_{i = 1}^s B_{\| \cdot \|_1}\left(u_i^\dagger, u_i^\dagger + K^\top A^\top v_i\right) + \chi_{\| \cdot \| \leq \beta}(v_i) \right] ,
\end{align*}
where $\chi_{\| \cdot \| \leq \beta}$ denotes the characteristic function that for an argument $v$ is zero if $\| v \| \leq \beta$ and infinity otherwise. Here we minimise an empirical risk for $s$ vectors of polynomial coefficients $\{ u^\dagger_i \}_{i = 1}^s$ subject to the constraint that the norm of the source condition elements $v_i$ should not exceed the threshold $\beta > 0$. If the data is representative, minimising this empirical risk can help  preconditioning the forward model and the data to lower the norms of the corresponding source condition elements, which in return ensures better convergence rates of the solution of $u_\alpha$ towards true coefficients $u^\dag$.

\subsubsection{Construction of data-driven regularisations}

One can extend the findings from Section \ref{sec:part-a} to design data-driven variational regularisation operators $R_\alpha$ with convergence rate $\mathcal{O}(\delta)$ and favourable error amplification constant $\| v \|$ in the error estimate \eqref{eq:error-estimate-linear}. In order to do so, we can make the assumption that we have a parametrised variational regularisation operator $R_\alpha$ of the form \eqref{eq:variational-regularisation} with a regularisation function of the form $J(u) = H(Au + b)$. We can then aim at estimating the linear operator $A$ and the function $b$ by minimising an empirical risk of the form
\begin{align*}
    \frac{1}{s} \sum_{i = 1}^s \left[ \Gamma(v_i) + \frac12 \| K^\ast v_i - A^\ast q^\dagger_i \|^2 + B_H\left(A u^\dagger_i + b, q^\dagger_i + A u^\dagger_i + b\right) \right] ,
\end{align*}
for some suitable regularisation function $\Gamma$. In order to keep the error amplification constants $\| v_i \|$ bounded, one intuitive choice for $\Gamma$ is 
\begin{align*}
    \Gamma(v) = \chi_{\| \cdot \| \leq \beta}(v) \colon = \begin{cases} 0 & \text{if } \| v \| \leq \beta \\ \infty & \text{if } \| v \| > \beta \end{cases} ,
\end{align*}
for a positive constant $\beta > 0$, similar to the operator correction example in Section \ref{sec:operator-correction}. Then one can formulate the constrained, non-convex minimisation problem
\begin{align*}
    \min_{\{ v_i \}_{i = 1}^s, \{ q^\dagger_i \}_{i = 1}^s, A, b} \left\{ \sum_{i = 1}^s \left[ \chi_{\| \cdot \| \leq \alpha}(v_i) + \frac12 \| K^\ast v_i - A^\ast q^\dagger_i \|^2 + B_H\left(A u^\dagger_i + b, q^\dagger_i + A u^\dagger_i + b\right) \right] \right\} .
\end{align*}
Similar to the idea proposed in Section \ref{sec:operator-correction}, minimising such an empirical risk can help identifying a suitable operator $A$ (and shift $b$) to lower the norms of the corresponding source condition elements, which in return ensures better convergence rates of the solution of the variational regularisation method towards the solution $u^\dag$ of the inverse problem \eqref{eq:InvProbl}.

\section*{Acknowledgements}
The authors would like to thank the Isaac Newton Institute for Mathematical Sciences, Cambridge, for support and hospitality during the programme `Mathematics of Deep Learning' where work on this paper was undertaken. This work was supported by EPSRC Grant No. EP/R014604/1. Also INdAM-GNCS, INdAM-GNAMPA are acknowledged. MB acknowledges support from the Alan Turing Institute. LR was supported by the Air Force Office of Scientific Research under award number FA8655-20-1-7027, and acknowledges the support of
Fondazione Compagnia di San Paolo. Research is also partly funded by PNRR - M4C2 - Investimento 1.3. Partenariato Esteso PE00000013 - ``FAIR - Future Artificial Intelligence Research" - Spoke 8 ``Pervasive AI", which is funded by the European Commission under the NextGeneration EU programme.
DR acknowledges support from EPSRC grant EP/R513106/1.

\section*{Data Availability Statement}
The Python codes for this paper are available at \url{https://github.com/reacho/source-condition}.

\bibliographystyle{amsplain}
\bibliography{bibliography-2}

\end{document}